\DeclareMathOperator*{\esssup}{ess\,sup}
\DeclareMathOperator*{\essinf}{ess\,inf}
\numberwithin{equation}{section}
\newtheorem{theorem}{Theorem}[section]
\newtheorem{lemma}[theorem]{Lemma}
\newtheorem{remark}[theorem]{Remark}
\newtheorem{definition}[theorem]{Definition}
\def\XXint#1#2#3{{\setbox0=\hbox{$#1{#2#3}{\int}$}
     \vcenter{\hbox{$#2#3$}}\kern-.5\wd0}}
\begin{document}


\title{Continuity at a boundary point of solutions to quasilinear elliptic equations
with generalized Orlicz growth and non-logarithmic conditions}

\author{Oleksandr V. Hadzhy,  \ \  Mykhailo V. Voitovych}

\maketitle

\begin{abstract}
We consider the Dirichlet problem for quasilinear elliptic equations
with Musielak-Orlicz $(p,q)$-growth and non-logarithmic conditions
on the coefficients.
A sufficient Wiener-type condition for the regularity of a boundary point is established.

\textbf{Keywords:} nonlinear elliptic equations; Musielak-Orlicz growth;
non-logarithmic condition;
boundary regularity; Wiener condition.

\textbf{MSC (2020)}: 35B65, 35D30, 35J60.

\end{abstract}

\pagestyle{myheadings} \thispagestyle{plain} \markboth{Oleksandr V. Hadzhy, \ \ Mykhailo V. Voitovych}
{Continuity at a boundary point of solutions to quasilinear elliptic equations
. . .}

\section{Introduction}\label{Introduction}

Let $\Omega$ be a bounded domain in $\mathbb{R}^{n}$ ($n\geqslant2$),
$\overline{\Omega}$ is the closure of $\Omega$ in $\mathbb{R}^{n}$,
and $\partial\Omega=\overline{\Omega}\setminus \Omega$ is the boundary of $\Omega$.
In this paper we study the regularity of boundary points ($x_{0}\in \partial\Omega$)
for bounded solutions to the quasilinear elliptic equations with Musielak-Orlicz
$(p,q)$-growth and non-logarithmic Zhikov's condition on the coefficients.
In more detail this means that we consider equations of the form
\begin{equation}\label{gellequation}
{\rm div}\bigg( g(x,|\nabla u|)\frac{\nabla u}{|\nabla u|} \bigg)=0,
\quad x\in \Omega,
\end{equation}
where the function
%
%
$g(x, {\rm v}): \mathbb{R}^{n}\times \mathbb{R}_{+}\rightarrow \mathbb{R}_{+}$,
$\mathbb{R}_{+}:=[0,+\infty)$
satisfies the following assumptions:
\begin{itemize}
\item[(${\rm g}_{0}$)]
$g(\cdot, {\rm v})\in L^{1}(\Omega)$ for all ${\rm v}\in \mathbb{R}_{+}$,
$g(x, \cdot)$ is continuous and increasing for all $x\in \mathbb{R}^{n}$,
$\lim\limits_{{\rm v}\rightarrow +0}g(x, {\rm v})=0$ and
$\lim\limits_{{\rm v}\rightarrow +\infty}g(x, {\rm v})=+\infty$;
\end{itemize}
\begin{itemize}
\item[(${\rm g}_{1}$)]
there exist $c_{1}>0$, $q>1$ and $b_{0}>0$ such that
\begin{equation}\label{gqineq}
\frac{g(x, {\rm w})}{g(x, {\rm v})}\leqslant
c_{1} \left( \frac{{\rm w}}{{\rm v}} \right)^{q-1},
\end{equation}
for all $x\in \overline{\Omega}$ and for all ${\rm w}\geqslant{\rm v}\geqslant b_{0}$;
\end{itemize}
\begin{itemize}
\item[(${\rm g}_{2}$)]
there exists $p>1$ such that
\begin{equation}\label{gpineq}
\frac{g(x, {\rm w})}{g(x, {\rm v})}\geqslant
\left( \frac{{\rm w}}{{\rm v}} \right)^{p-1},
\end{equation}
for all $x\in \overline{\Omega}$ and for all ${\rm w}\geqslant{\rm v}>0$;
\end{itemize}
\begin{itemize}
\item[(${\rm g}_{3}$)]
there exist a non-decreasing function $c_{2}: \mathbb{R}_{+}\rightarrow \mathbb{R}_{+}$
and a continuous, non-increasing function $\lambda:(0, r_{\ast})\rightarrow \mathbb{R}_{+}$
such that for any ball $B_{r}(x_{0})$ centered at $x_{0}\in \overline{\Omega}$,
for all
$x_{1}$, $x_{2}\in B_{r}(x_{0})$
and for all $0<r\leqslant {\rm v}\leqslant K$, the following inequality holds:
\begin{equation}\label{eqnonlogcond}
g(x_{1},  {\rm v}/r)\leqslant c_{2}(K)\,e^{\lambda(r)} g(x_{2},  {\rm v}/r).
\end{equation}
\end{itemize}
These assumptions are quite general and cover a wide class of elliptic equations with non-standard
growth conditions. 
For example, the following functions: 
\begin{itemize}
\item[1)] variable exponent $g(x, {\rm v})= {\rm v}^{\,p(x)-1}$,
cf. \cite{Alhutov97, Fan1995, FanZhao1999},
\item[2)] perturbed variable exponent
$g(x, {\rm v})= {\rm v}^{\,p(x)-1}\ln(e+{\rm v})$, cf.
\cite{GianPasdiNap2013, LiangCaiZheng, Ok_CalcVar2016, Ok_AdvNonlinAn2018},
\item[3)] double phase $g(x, {\rm v})= {\rm v}^{\,p-1}+a(x){\rm v}^{\,q-1}$,
$0\leqslant a(x)\in L^{\infty}(\Omega)$, cf.
\cite{BarColMing, BarColMingStPt16, BarColMingCalc.Var.18, ColMing218, ColMing15},
\item[4)] degenerate double phase
$g(x, {\rm v})={\rm v}^{\,p-1} \big(1+b(x)\ln(1+{\rm v}) \big)$,
$0\leqslant b(x)\in L^{\infty}(\Omega)$, cf.
\cite{BalciSurn, BarColMingStPt16, ByunOh},
\item[5)]
double variable exponent
$g(x, {\rm v})= {\rm v}^{\,p(x)-1}+{\rm v}^{\,q(x)-1}$,
cf. \cite{CenRadRep_NA2018, ShiRadRepZh_AdvCalcVar2020, ZhangRad_JMathPAppl2018},
\item[6)] variable exponent double phase
$g(x, {\rm v})= {\rm v}^{\,p(x)-1}+a(x){\rm v}^{\,q(x)-1}$,
$0\leqslant a(x)\in L^{\infty}(\Omega)$,
cf. \cite{MaedMizOhnShim2019, RagTach2020}
\end{itemize}
%
satisfy assumptions
(${\rm g}_{0}$)--(${\rm g}_{3}$) if the exponents $p$, $q$, $p(\cdot)$, $q(\cdot)$
and the coefficients $a(\cdot)$
and $b(\cdot)$ are such that:
$$
\text{(i)} \quad
1<p< p(x)\leqslant q(x)< q<+\infty \quad \text{for all } \ x\in \Omega;
\hskip 28mm
$$
\begin{equation}\label{eqSkr1.3}
\begin{aligned}
\text{(ii)} \quad &|p(x)-p(y)|+|q(x)-q(y)|\leqslant
\frac{\lambda(|x-y|)}
{\big|\ln |x-y|\big|},
\quad x,y\in \Omega, \quad x\neq y, \
\\
& \text{the function} \ \frac{\lambda(r)}{|\ln r|}
\text{ is non-decreasing on } (0,r_{\ast}), \
\lim\limits_{r\rightarrow 0} \frac{\lambda(r)}{|\ln r|}=0;
\hskip 5 mm
\end{aligned}
\end{equation}
\begin{equation}\label{axconditin}
\begin{aligned}
\text{(iii)} \quad &|a(x)-a(y)|\leqslant A|x-y|^{\alpha} e^{\lambda(|x-y|)},
\quad x,y\in \Omega,
\quad x\neq y,
\\
&A>0, \quad 0<q-p\leqslant\alpha \leqslant 1,
\\
& \text{the function} \ r^{\alpha}e^{\lambda(r)} \text{ is non-decreasing on } (0,r_{\ast}), \
\lim\limits_{r\rightarrow 0}r^{\alpha}\,e^{\lambda(r)}=0;
\end{aligned}
\end{equation}
\begin{equation}\label{bxconditin}
\begin{aligned}
\text{(iv)} \quad &|b(x)-b(y)|\leqslant  \frac{B\,e^{\lambda(|x-y|)}}{\big|\ln |x-y| \big|},
\quad x,y\in \Omega, \quad x\neq y, \quad B>0,
\\
& \text{the function} \ \frac{e^{\lambda(r)}}{|\ln r|}\text{ is non-decreasing on } (0,r_{\ast}),
\ \lim\limits_{r\rightarrow 0} \frac{e^{\lambda(r)}}{|\ln r|}=0. \hskip 5 mm
\end{aligned}
\end{equation}

Recently, many studies have been devoted to questions of regularity under Zhikov's
\cite{ZhikMathPhys95} and Fan's \cite{Fan1995}  logarithmic
condition, when  $\lambda(r)\leqslant L<+\infty$ in \eqref{eqSkr1.3}--\eqref{bxconditin}
(see e.g.
\cite{Alhutov97, BalciSurn, BarColMing, BarColMingStPt16,
BarColMingCalc.Var.18, ByunOh, ColMing218, ColMing15, FanZhao1999,
GianPasdiNap2013, LiangCaiZheng, Ok_CalcVar2016, Ok_AdvNonlinAn2018, RagTach2020, VoitNA19}),
and now the elliptic theory has reached a completely satisfactory form
(see original surveys
\cite{HarHastLeNuorNA2010, MingioneDarkSide, MingRadul, Radul_NA2015}
and monograph \cite{RadRep}).
In general, the logarithmic condition has significantly advanced the theory of function spaces
with variable exponents \cite{CruzUrFior, DienHarHastRuzVarEpn},
which are an integral part of the generalized
Orlicz (Musielak-Orlicz) spaces \cite{HarHastOrlicz, Musielak}.
Nonlinear differential equations in Muselak-Orlicz spaces is a fairly extensive topic
for research (see the informative survey \cite{ChlebPock}).
Here we only cite the recent papers
\cite{BenHarHastKarp, BenKhlifi, HarHastLee, HastOk}
on H\"{o}lder's regularity and  Harnack's inequality under the
Musielak-Orlicz assumptions and logarithmic conditions in the elliptic framework.

The non-logarithmic case, when the function $\lambda(r)$ is unbounded in \eqref{eqnonlogcond},
\eqref{eqSkr1.3}--\eqref{bxconditin}, differs significantly from the logarithmic one.
To our knowledge there are few regularity results in this direction. Zhikov \cite{ZhikPOMI04}
obtained a generalization of the logarithmic condition on the variable exponent
$p(x)\geqslant p>1$ which guarantees the density
of smooth functions in the Sobolev space $W^{1,p(x)}(\Omega)$
: 
\begin{equation}\label{Zhik2log}
|p(x)-p(y)|\leqslant L\,
\frac{ \left|\ln \big|\ln |x-y|\big| \right|}{\big|\ln |x-y|\big|},
\quad x,y\in\Omega, \quad x\neq y, \quad L<p/n.
\end{equation}
Later Zhikov and Pastukhova \cite{ZhikPast2008MatSb}
under the same condition proved higher integrability
of the gradient of solutions to the $p(x)$-Laplace equation.

Interior continuity, continuity up to the boundary and Harnack's inequality to
$p(x)$-Laplace equation were proved by
Alkhutov and Krasheninnikova \cite{AlhutovKrash08},
Alkhutov and Surnachev \cite{AlkhSurnAlgAn19} and Surnachev \cite{SurnPrepr2018}
under condition \eqref{eqSkr1.3} on $p(\cdot)$ and the additional assumption
\begin{equation}\label{AlkSurncond}
\int_{0}\exp\Big(-C\exp\big( \beta\lambda(r) \big)  \Big)\,
\frac{dr}{r}=+\infty
\end{equation}
with some constants $C>0$, $\beta>1$ depending only upon the data.
Particularly, the function $\lambda(r)=L\ln\ln\ln r^{-1}$, $L\beta<1$ 
satisfies the above conditions.
These results were generalized in \cite{SkrVoitarXiv20, ShSkrVoit}
for a wide class of elliptic and parabolic equations with non-logarithmic
generalized Orlicz growth, and improved in \cite{HadzhySkrVoit} for double phase elliptic
equations (see type 3 and \eqref{axconditin})
with $\lambda(r)=L\ln\ln r^{-1}$ (cf. \cite{ZhikPOMI04, ZhikPast2008MatSb}
and \eqref{Zhik2log}).


The purpose of this article is to establish a Wiener-type sufficient condition for the regularity
of a boundary point $x_{0}\in \partial\Omega$ for bounded solutions of Eq.
\eqref{gellequation} under assumptions (${\rm g}_{0}$)--(${\rm g}_{3}$).
Boundary regularity in terms of the Sobolev variational capacity is a classical topic in the
contemporary theory of PDE, which goes back to
Lebesgue \cite{Lebesgue} and Wiener \cite{Wienern, WienerDP}.
The  papers
\cite{LitStWei}, \cite{Mazya1970}, \cite{GarZiem}, \cite{Skryp1984},
\cite{LindMart}, \cite{KilpMaly}
provided the basic contribution to the boundary regularity for
quasilinear elliptic equations with standard growth conditions.
For the variable $p(x)$-growth, we cite \cite{AlhKrash04} in the log-case,
and \cite{AlkhSurnAlgAn19} for non-logarithmic conditions
\eqref{eqSkr1.3}, \eqref{AlkSurncond} on the exponent $p(\cdot)$.
In the Orlicz case, the Wiener criterion for the regularity of Sobolev boundary point
was established by Ki-Ahm Lee and Se-Chan Lee \cite{KiALeeSe-Ch_JDE2021}.
Under generalized Orlicz growth and logarithmic counterparts of \eqref{eqnonlogcond},
Harjulehto and H\"{a}st\"{o} \cite{HarHastboundreg} obtained the following capacity density
condition for the regularity
of a boundary point $x_{0}\in\partial\Omega$: there exists $c\in (0,1)$ and $R>0$ such that
$
\mathrm{Cap}_{G}\big(B_{\rho}(x_{0})\setminus\Omega, B_{2\rho}(x_{0})\big)
\geqslant c\,
\mathrm{Cap}_{G}\big(B_{\rho}(x_{0}), B_{2\rho}(x_{0})\big)$
for all $0<\rho<R$, $G(\cdot, {\rm v})\asymp g(\cdot, {\rm v}){\rm v}$.
This result was improved by Benyaiche and Khlifi \cite{BenKhlifiWiener} to a full-fledged
Wiener-type criterion: the point $x_{0}$ is regular if and only if for some $R>0$,
$$
\int_{0}^{R}g^{-1}_{x_{0}}
\bigg( \frac{{\rm Cap}_{G}\big(B_{\rho}(x_{0})
\setminus\Omega, B_{2\rho}(x_{0}) \big)}{\rho^{n-1}}
 \bigg)\,d\rho=+\infty
$$
where $g^{-1}_{x_{0}}(\cdot)$ denotes the inverse function to the function $g(x_{0}, \cdot)$.

In this paper, we show (see Theorem \ref{thregpoint}) that the corresponding regularity
condition for bounded solutions to Eq. \eqref{gellequation}
under assumptions (${\rm g}_{0}$)--(${\rm g}_{3}$) has the following form:
$$
\int_{0}g^{-1}_{x_{0}}
\bigg(\Lambda(-C,3n,\rho)\,
\frac{{\rm Cap}_{G}\big(\overline{B}_{\rho/32}(x_{0})
\setminus\Omega, B_{\rho}(x_{0}) \big)}{\rho^{n-1}}
 \bigg)\,d\rho=+\infty,
$$
where $C$ is a positive constant depending only upon the data, and
\begin{equation}\label{defLambda}
\Lambda(c,\beta,\rho)=\exp\Big(c\exp\big( \beta\lambda(\rho) \big)  \Big)
\quad
\text{for any} \ c, \beta\in \mathbb{R} \ \text{and} \ \rho\in(0,r_{\ast}).
\end{equation}
Thus, we expand the previous results in several directions.
Namely, the result of Alkhutov and Surnachev \cite{AlkhSurnAlgAn19} extends from the
$p(x)$-Laplace equation to equations with generalized Orlicz growth,
and the results of Benyaiche and Khlifi \cite{BenKhlifi, BenKhlifiWiener} extend
to the non-logarithmic generalized Orlicz growth.
In particular, our results are new for bounded solutions to equations of types 2, 4--6.
The main results are set out in Section \ref{Sectmnres}, and
their proofs are contained in Sections \ref{proofregcond}--\ref{Sectgrest}.

\section{Main Results}\label{Sectmnres}

Before formulating the main results, we remind the definition of a weak solution
to Eq. \eqref{gellequation}.
Moreover, throughout the article, we use the well-known notation for sets in $\mathbb{R}^{n}$,
for function spaces and their elements etc. (see, for instance,
\cite{GilbTr, HarHastOrlicz, LadUr}).
We set
\begin{equation}\label{deffuncgw}
G(x, {\rm v})=g(x,{\rm v}){\rm v} \ \ \text{for} \ \ x\in\mathbb{R}^{n}, \ {\rm v}\geqslant0
\end{equation}
and write $u\in W^{1,G}(\Omega)$ if $u\in W^{1,1}(\Omega)$
and $\int_{\Omega}G(x, |\nabla u|)\,dx<+\infty$.
We also need a class of functions
$W^{1,G}_{0}(\Omega):=W_{0}^{1,1}(\Omega)\cap W^{1,G}(\Omega)$.

\begin{definition}[definition of solutions]\label{defsolution}
{\rm We say that a function $u:\Omega\rightarrow\mathbb{R}$ is a bounded weak solution
to Eq. \eqref{gellequation} under hypotheses (${\rm g}_{0}$)--(${\rm g}_{3}$) if
$u\in W^{1,G}(\Omega)\cap L^{\infty}(\Omega)$ and for any $\varphi\in W^{1,G}_{0}(\Omega)$
the following integral identity holds: 
\begin{equation}\label{gelintidentity}
\int_{\Omega}
g(x,|\nabla u|)\,\frac{\nabla u}{|\nabla u|}\,\nabla\varphi\,dx=0. 
\end{equation}
}
\end{definition}

Further we also need the following definitions.
\begin{definition}[regular boundary points]\label{defregpoint}
{\rm
We say that $x_{0}\in \partial\Omega$ is a regular boundary point of the domain $\Omega$
for Eq. \eqref{gellequation} if for every $f\in C(\overline{\Omega})\cap W^{1,G}(\Omega)$
and for every bounded weak solution $u$ of Eq. \eqref{gellequation} satisfying the condition
$u-f\in W^{1,G}_{0}(\Omega)$ the following equality holds
$\lim\limits_{\Omega\ni x\rightarrow x_{0}}u(x)=f(x_{0})$.
}
\end{definition}
\begin{definition}[capacity]\label{defcapacity}
{\rm Let $E$ be a compact subset of $B_{\rho}(x_{0})$,
and let $\mathfrak{M}=\mathfrak{M}(E,B_{\rho}(x_{0}))$ be
the class  of all functions
$v\in W_{0}^{1,G}(B_{\rho}(x_{0}))$ satisfying $v\geqslant1$ on $E$
in the sense of $W_{0}^{1,1}(B_{\rho}(x_{0}))$.
Then the relative $G$-capacity of $E$
is defined by
\begin{equation}\label{defCap}
\mathrm{Cap}_{G}\big(E, B_{\rho}(x_{0})\big)=
\inf\limits_{v\in\mathfrak{M}}
\int_{B_{\rho}(x_{0})} G(x, |\nabla \varphi|)\,dx.
\end{equation}
}
\end{definition}

Now we can  state our main result.
In this case, we refer to the parameters
$M=\esssup_{\Omega} |u|$, $n$, $p$, $q$, $c_{1}$, $c_{2}(M)$, $b_{0}$
as our structural data and write $\gamma$ for constants if they can be quantitatively
determined a priori only in terms of the above quantities.
The generic constant $\gamma$ may vary from line to line.

\begin{theorem}[Wiener type regularity condition]\label{thregpoint}
Let hypotheses $({\rm g}_{0})$--$({\rm g}_{3})$ be fulfilled.
Then a boundary point $x_{0}\in \partial\Omega$ is regular in the sense of
Definition \ref{defregpoint} if under notation \eqref{defLambda}, \eqref{deffuncgw}
and \eqref{defCap} the following condition is satisfied:
\begin{equation}\label{Winercond}
\int_{0}g^{-1}_{x_{0}}
\bigg(\Lambda(-\gamma,3n,\rho)\,
\frac{{\rm Cap}_{G}\big(\overline{B}_{\rho/32}(x_{0})
\setminus\Omega, B_{\rho}(x_{0}) \big)}{\rho^{n-1}}
 \bigg)\,d\rho=+\infty.
\end{equation}
Here we use the notation $g^{-1}_{x_{0}}(\cdot)$
for the inverse function to the function $g(x_{0}, \cdot)$.
\end{theorem}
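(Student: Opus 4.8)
The plan is to prove that the Wiener-type series \eqref{Winercond} forces $\limsup_{\Omega \ni x \to x_0} |u(x) - f(x_0)| = 0$ for every admissible boundary datum $f$. After reducing to the case $f(x_0) = 0$ by subtracting a constant (which is harmless since constants solve \eqref{gellequation} and $f$ is continuous at $x_0$), and by replacing $u$ with $\pm u$, it suffices to bound from above the quantity
$$
M(\rho) := \esssup_{\Omega \cap B_\rho(x_0)} \big(u - \sup_{\partial\Omega \cap B_\rho(x_0)} f\big)_+
$$
and show $M(\rho) \to 0$ as $\rho \to 0$. The engine of the argument is a \emph{De Giorgi--type decay estimate} relating $M(\rho)$ to $M(2\rho)$ (or $M(\rho)$ to $M(\rho/\text{const})$): one shows that on each annulus, either the oscillation already drops by a fixed factor, or the relative $G$-capacity of the complement of $\Omega$ contributes a definite amount to the decay. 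Concretely, I would aim for an inequality of the shape
$$
M(\rho) \le M(R) \exp\!\Big(-\gamma \int_\rho^{R} g^{-1}_{x_0}\!\Big(\Lambda(-\gamma,3n,s)\,\tfrac{\mathrm{Cap}_G(\overline{B}_{s/32}(x_0)\setminus\Omega,\, B_s(x_0))}{s^{n-1}}\Big)\,ds\Big),
$$
so that divergence of the integral in \eqref{Winercond} yields $M(\rho)\to 0$. This is the boundary analogue of the classical Maz'ya--Skrypnik scheme, adapted to the Musielak--Orlicz setting.

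The technical heart breaks into several pieces, carried out in this order. First, a \textbf{Caccioppoli inequality at the boundary}: testing \eqref{gelintidentity} with $\varphi = \eta^q (u - k)_+$ for levels $k \ge \sup_{\partial\Omega \cap B_\rho} f$ (so that $\varphi \in W^{1,G}_0$ thanks to the boundary condition $u - f \in W^{1,G}_0(\Omega)$ and $f$ continuous), using (${\rm g}_1$)--(${\rm g}_2$) to convert $g(x,|\nabla u|)$ terms into $G(x,|\nabla u|)$ on the super-level sets, and absorbing the $\eta$-gradient term via Young's inequality in the generalized Orlicz scale. Second, the \textbf{quantitative capacitary estimate}: on a dyadic annulus one combines the Caccioppoli bound with a Sobolev-type inequality and the definition \eqref{defCap} of $\mathrm{Cap}_G$, testing with a function that equals $1$ on $\overline{B}_{\rho/32}(x_0)\setminus\Omega$, to obtain that the measure (or $G$-energy) of the set where $u$ is close to its sup is controlled by $\mathrm{Cap}_G(\cdot)$ times a universal factor. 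Third — and this is where the non-logarithmic condition (${\rm g}_3$) enters — one must \textbf{freeze the $x$-dependence of $g$}: on a ball $B_\rho(x_0)$, inequality \eqref{eqnonlogcond} lets one replace $g(x,\cdot)$ by $g(x_0,\cdot)$ at the cost of the factor $c_2(K)e^{\lambda(\rho)}$, but each such replacement is used inside an iteration, and tracking how these factors accumulate over $\sim \log(R/\rho)$ dyadic scales is exactly what produces the double-exponential weight $\Lambda(-\gamma,3n,\rho) = \exp(-\gamma \exp(3n\lambda(\rho)))$ in \eqref{Winercond}. The exponent $3n$ and the sign/size of $\gamma$ come from balancing the number of iterations against the per-step loss; this bookkeeping is the main obstacle and the place where the argument genuinely differs from the logarithmic case.

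Finally, with the one-step decay estimate in hand, I would \textbf{iterate over a geometric sequence of radii} $\rho_j = R\,\theta^j$: at each step either $M(\rho_{j+1}) \le (1-\delta) M(\rho_j)$ for a fixed $\delta$, or the capacitary term on that annulus is bounded below, and summing the logarithms of the decay factors reproduces the integral in \eqref{Winercond} (after comparing the dyadic sum to the integral using monotonicity of $\lambda$ and of $r \mapsto g^{-1}_{x_0}$). Divergence of the series then forces $M(\rho_j) \to 0$, hence $\limsup_{x\to x_0} u(x) \le f(x_0)$; applying the same to $-u$ with datum $-f$ gives the matching lower bound and completes the proof. A subsidiary point to handle carefully is the passage between the "pointwise" oscillation $M(\rho)$ and the $L^\infty$ bound $M = \esssup_\Omega|u|$ that appears in the structural data — this only affects constants, via $c_2(M)$ in (${\rm g}_3$), and does not disturb the structure of \eqref{Winercond}.
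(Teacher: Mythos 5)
Your outline goes in a genuinely different direction from the paper (a direct Maz'ya--De Giorgi multi-scale decay estimate rather than the Gariepy--Ziemer contradiction scheme actually used), but as it stands it has two concrete gaps. First, the mechanism you propose for the weight $\Lambda(-\gamma,3n,\rho)=\exp\big(-\gamma\exp(3n\lambda(\rho))\big)$ is not right: accumulating the per-step freezing loss $c_{2}e^{\lambda(\rho_j)}$ of $({\rm g}_{3})$ over $\sim\log(R/\rho)$ dyadic scales would produce a factor like $\exp\big(\gamma\,\lambda(\rho)\log(R/\rho)\big)$, i.e.\ a power of $R/\rho$, not a double exponential in $\lambda(\rho)$. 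In the paper the double exponential is incurred at a \emph{single} scale, inside the boundary weak Harnack inequality (Theorem \ref{thweakHarnineq}): the De Giorgi/Trudinger argument forces the small integrability exponent $\delta_{0}\sim e^{-2n\lambda(\rho)}$ of \eqref{ShSkr2.15}, and raising constants of size $e^{\gamma\lambda(\rho)}$ to the power $1/\delta_{0}$ in Lemmas \ref{ShSkrlem2.2}--\ref{sk.lem 2.4} is what creates $\Lambda(\gamma,3n,\rho)$. Your sketch never invokes any Harnack-type ingredient at all; a Caccioppoli inequality plus Sobolev plus the capacity test function does not by itself yield an estimate in which the quantity $g\big(x_{0},(M_{l}(\rho)-M_{l}(\rho/4)+2b_{0}\rho)/\rho\big)$ (i.e.\ the drop of the supremum between scales $\rho$ and $\rho/4$) appears on the right-hand side, and without that quantity there is nothing to telescope or integrate against the Wiener density.

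Second, the strong decay estimate you aim for,
\begin{equation*}
M(\rho)\ \leqslant\ M(R)\exp\Big(-\gamma\int_{\rho}^{R}g^{-1}_{x_{0}}\Big(\Lambda(-\gamma,3n,s)\,
\tfrac{{\rm Cap}_{G}(\overline{B}_{s/32}(x_{0})\setminus\Omega,B_{s}(x_{0}))}{s^{n-1}}\Big)ds\Big),
\end{equation*}
is stronger than what the paper proves, and its one-step version runs into a normalization problem that the contradiction scheme is designed to avoid: to make $u_{l,\rho}\eta/(M_{l}(\rho)+2b_{0}\rho)$ admissible for ${\rm Cap}_{G}$ and compare $G$ of its gradient with $G(x,|\nabla(u_{l,\rho}\eta)|)$ one pays factors $(M_{l}(\rho)+2b_{0}\rho)^{1-q}$ (via $({\rm g}_{1})$, because of the threshold $b_{0}$), i.e.\ the constant in the capacity bound \eqref{eq2.0} degenerates as $M_{l}(\rho)\to0$ --- exactly the regime your direct iteration must enter. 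The paper instead assumes \eqref{Lfhyp}, so $M_{l}(\rho)\geqslant a>0$ uniformly, proves the single-scale capacity bound \eqref{eq2.0} from the gradient estimate \eqref{eq1.7} (which in turn rests on Theorem \ref{thweakHarnineq}), inverts $g_{x_{0}}$, and integrates in $\rho$ using $\int_{0}\rho^{-1}\big(M_{l}(\rho)-M_{l}(\rho/4)+2b_{0}\rho\big)d\rho<+\infty$ to contradict \eqref{Winercond}; no multi-scale iteration and no quantitative modulus of continuity are needed. To rescue your route you would either have to run it under the same contradiction hypothesis (at which point it collapses onto the paper's argument) or prove a genuinely quantitative boundary decay estimate in the non-logarithmic setting, which is a substantially harder and unproven claim.
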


The proof of Theorem \ref{thregpoint} is carried out by contradiction
(see Section \ref{proofregcond}) in the spirit of the classic paper by
R. Gariepy and W.\,P. Ziemer \cite{GarZiem}.
In fact, Theorem \ref{thregpoint} is a consequence of an integral growth estimate
on the gradient of solutions (cf. \cite[Theorem\,2.1]{GarZiem}).
To state it we need additional notations.
Let $f\in C(\overline{\Omega})\cap W^{1,G}(\Omega)$ and
$u$ be a bounded weak solution of Eq. \eqref{gellequation} satisfying the condition
$u-f\in W^{1,G}_{0}(\Omega)$. Next, let $x_{0}\in \partial\Omega$ be a boundary point,
and let $l\in \mathbb{R}$, $l\neq f(x_{0})$. By the continuity of $f$ on $\partial\Omega$,
there exists a sufficiently small radius $\rho>0$ such that
$$
l\geqslant M_{f}(r):=\sup\limits_{\partial\Omega\cap B_{r}(x_{0})}f
\quad \text{if } \ l>f(x_{0}) \ \text{ and } \ 0<r\leqslant \rho,
$$
$$
l\leqslant m_{f}(r):=\inf\limits_{\partial\Omega\cap B_{r}(x_{0})}f
\quad \text{if } \ l<f(x_{0}) \ \text{ and } \ 0<r\leqslant \rho.
$$
We set $c_{+}=\max\{c,0\}$ for every $c\in\mathbb{R}$,
\begin{equation}\label{defOmegalr}
\Omega_{l,r}=\begin{cases}
\{u>l\}\cap B_{r}(x_{0})
& \text{if } \ l>f(x_{0}) \ \text{and} \ 0<r\leqslant \rho, \\
\{u<l\}\cap B_{r}(x_{0})
& \text{if } \ l<f(x_{0}) \ \text{and} \ 0<r\leqslant \rho,
\end{cases}
\end{equation}
\begin{equation}\label{defulrho}
u_{l}=\begin{cases}
(u-l)_{+}
& \text{on } \Omega \ \text{ if } \ l\geqslant M_{f}(\rho), \\
(l-u)_{+}
& \text{on } \Omega \ \text{ if } \ l\leqslant m_{f}(\rho), \\
0
& \text{on } \mathbb{R}^{n}\setminus\Omega,
\end{cases}
\quad\quad
\begin{aligned}
M_{l}(r)&=\esssup\limits_{B_{r}(x_{0})}u_{l}
\ \text{ for} \ 0<r\leqslant \rho,
\\
u_{l,\rho}&=M_{l}(\rho)-u_{l}+2b_{0}\rho.
\end{aligned}
\end{equation}

\begin{theorem}[growth gradient estimate]\label{thgradest}
Let hypotheses $({\rm g}_{0})$--$({\rm g}_{3})$ be fulfilled.
Then, in terms of notation \eqref{defLambda}, \eqref{deffuncgw},  \eqref{defOmegalr}
and \eqref{defulrho}, the following inequality holds:
\begin{equation}\label{eq1.7}
\begin{aligned}
&\int_{\Omega_{l,\rho/16}}G(x, |\nabla(u_{l,\rho}\eta)|)\,dx
\\
&\leqslant \Lambda(\gamma, 3n,\rho)(M_{l}(\rho)+2b_{0}\rho)\rho^{n-1}
g\bigg( x_{0},\frac{M_{l}(\rho)-M_{l}(\rho/4)+2b_{0}\rho}{\rho} \bigg).
\end{aligned}
\end{equation}
Here 
 $\eta\in C_{0}^{\infty}(B_{\rho/16}(x_{0}))$ is a function such that
$0\leqslant\eta\leqslant1$, $\eta=1$ in $B_{3\rho/64}(x_{0})$,
$|\nabla\eta|\leqslant 64/\rho$.
\end{theorem}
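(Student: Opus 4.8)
The plan is to prove the Caccioppoli-type gradient growth estimate \eqref{eq1.7} by testing the integral identity \eqref{gelintidentity} with a suitably chosen admissible function and then exploiting the structure conditions $({\rm g}_0)$--$({\rm g}_3)$ to convert the resulting estimate at the scale $\rho$ into one expressed through the frozen function $g(x_0,\cdot)$. Concretely, I would work with $w:=u_{l,\rho}=M_l(\rho)-u_l+2b_0\rho$, which satisfies $2b_0\rho\leqslant w\leqslant M_l(\rho)+2b_0\rho$ on $B_\rho(x_0)$, and with the cutoff $\eta$ supplied in the statement. The natural test function in the spirit of Gariepy--Ziemer is of the form $\varphi=\pm(\,w^{-\kappa}-\text{const}\,)\eta^{q}$ (with $\kappa>0$ a small exponent depending on the data, or alternatively $\varphi=\pm(\text{const}-w)\eta^q$ combined with a logarithmic test function $\log\frac{M_l(\rho)+2b_0\rho}{w}$), chosen so that $\nabla\varphi$ has a good sign on $\Omega_{l,\rho}$ when paired against $g(x,|\nabla u|)\frac{\nabla u}{|\nabla u|}$. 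Since $u_l$ is a subsolution (resp.\ supersolution) on $\Omega_{l,\rho}$ and vanishes in a neighborhood of $\partial\Omega$ in the relevant sense, $\varphi$ lies in $W^{1,G}_0$ and is admissible; plugging it in and moving the cross terms to one side yields, after Young's inequality with the Young conjugate of $G(x,\cdot)$,
\[
\int_{\Omega_{l,\rho}}G(x,|\nabla(w\eta)|)\,\eta^{\,q-1}\,dx
\ \leqslant\ \gamma\int_{\Omega_{l,\rho}}G\!\left(x,\frac{w}{\rho}\right)\eta^{\,q-1}\,dx
\ +\ (\text{boundary/annulus terms}).
\]

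The next step is to estimate the right-hand side. On the support of $\nabla\eta$, i.e.\ on the annulus $B_{\rho/16}(x_0)\setminus B_{3\rho/64}(x_0)$, the quantity $w/\rho$ is controlled by $\frac{M_l(\rho)-M_l(\rho/4)+2b_0\rho}{\rho}$ up to the data (this is where the oscillation decay enters and where $2b_0\rho$ plays its regularizing role, guaranteeing $w/\rho\geqslant b_0$ so that $({\rm g}_1)$ applies), while on the inner region the monotonicity $g(x,\cdot)\nearrow$ together with $({\rm g}_2)$ controls $G(x,w/\rho)$ against $\frac{w}{\rho}\,g(x,w/\rho)$. I would then invoke the non-logarithmic condition $({\rm g}_3)$: for $x\in B_\rho(x_0)$ and with $\mathrm v:=w$ (legitimately $\rho\leqslant \mathrm v\leqslant K$ once $K$ is taken as $M+2b_0 r_\ast$ or similar), \eqref{eqnonlogcond} gives $g(x,w/\rho)\leqslant c_2(K)e^{\lambda(\rho)}g(x_0,w/\rho)$. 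Iterating this comparison across the dyadic scales between $\rho/4$ and $\rho$ — which is precisely why the exponential-of-exponential factor $\Lambda(\gamma,3n,\rho)=\exp(\gamma\exp(3n\lambda(\rho)))$ appears rather than a mere $e^{\gamma\lambda(\rho)}$ — and using $g(x_0,\cdot)$'s doubling via $({\rm g}_1)$--$({\rm g}_2)$ to replace $g(x_0,w/\rho)$ by $g\!\big(x_0,\frac{M_l(\rho)-M_l(\rho/4)+2b_0\rho}{\rho}\big)$, together with the volume bound $|B_{\rho/16}(x_0)|\leqslant\gamma\rho^{n}$ rewritten as $\rho\cdot\rho^{n-1}$ and the sup bound $w\leqslant M_l(\rho)+2b_0\rho$, collapses the right-hand side into exactly the claimed product $\Lambda(\gamma,3n,\rho)(M_l(\rho)+2b_0\rho)\rho^{n-1}g\big(x_0,\frac{M_l(\rho)-M_l(\rho/4)+2b_0\rho}{\rho}\big)$.

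Finally one restricts the left-hand side integral from $\Omega_{l,\rho}$ down to $\Omega_{l,\rho/16}$, on which $\eta\equiv1$ on $B_{3\rho/64}$ and $\eta^{q-1}\geqslant$ its value; more carefully, since $\eta=1$ on $B_{3\rho/64}\supset B_{\rho/16}$ is false, I instead keep $\eta$ throughout and use that $w\eta=u_{l,\rho}\eta$ literally on the left, so the left side is already $\int_{\Omega_{l,\rho/16}}G(x,|\nabla(u_{l,\rho}\eta)|)\,dx$ up to discarding the nonnegative contribution from $\{\rho/16<|x-x_0|\}$ (note $\mathrm{supp}\,\eta\subset B_{\rho/16}$, so in fact the integral over $\Omega_{l,\rho/16}$ and over $\Omega_{l,\rho}$ of $G(x,|\nabla(u_{l,\rho}\eta)|)$ coincide). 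This yields \eqref{eq1.7}.

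The main obstacle, and the step requiring the most care, is controlling the constant through the non-logarithmic condition: a single application of $({\rm g}_3)$ costs a factor $e^{\lambda(\rho)}$, but the test-function argument and the dyadic oscillation comparison force one to chain $O(\lambda(\rho))$-many such applications (the exponent $q-1$ appearing in $\eta^q$, and the comparison of $g$ at comparable but not equal arguments, each contribute), and keeping track of how these compound into the double-exponential $\Lambda(\gamma,3n,\rho)$ — with the correct linear-in-$n$ constant $3n$ in the inner exponent — is the delicate bookkeeping. A secondary subtlety is verifying admissibility of the singular-in-$w$ test function in $W^{1,G}_0(\Omega)$ given only $u\in W^{1,G}\cap L^\infty$: this needs the lower bound $w\geqslant 2b_0\rho>0$ (so $w^{-\kappa}$ is bounded), the boundary condition $u-f\in W^{1,G}_0$ together with the choice of $l$ relative to $M_f(\rho)$ or $m_f(\rho)$ ensuring $u_l$ vanishes near $\partial\Omega\cap B_\rho$, and a routine truncation/approximation argument.
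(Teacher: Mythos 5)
There is a genuine gap, and it sits exactly at the step you describe as ``the quantity $w/\rho$ is controlled by $\frac{M_l(\rho)-M_l(\rho/4)+2b_0\rho}{\rho}$ up to the data.'' No such pointwise control exists: on $B_{\rho/8}(x_0)$ (in particular on the annulus carrying $\nabla\eta$, and on the region where $u_l=0$) the function $u_{l,\rho}=M_l(\rho)-u_l+2b_0\rho$ can be as large as $M_l(\rho)+2b_0\rho$, and in the regime relevant for Theorem \ref{thregpoint} one has $M_l(\rho)\geqslant a>0$ while $M_l(\rho)-M_l(\rho/4)$ must be small (its summability over dyadic scales is what produces the contradiction), so the two quantities are not comparable and no amount of doubling via $({\rm g}_1)$--$({\rm g}_2)$ or freezing via $({\rm g}_3)$ can bridge them. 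The only reason the integral $\int_{B_{\rho/8}(x_0)}g\big(x_0,u_{l,\rho}/\rho\big)\,dx$ (and its powers) can be bounded by $\rho^{n}\,g\big(x_0,\frac{M_l(\rho)-M_l(\rho/4)+2b_0\rho}{\rho}\big)$ is that $M_l(\rho)-M_l(\rho/4)+2b_0\rho$ is essentially the \emph{infimum} of $u_{l,\rho}$ over $B_{\rho/4}(x_0)$, and passing from an integral average to the infimum is precisely the content of the boundary weak Harnack inequality \eqref{weakHrnckineq} (Theorem \ref{thweakHarnineq}). Your proposal never invokes this result, yet it is the engine of the paper's proof: after testing with $\varphi=u_l\zeta^{\,q}$ and splitting by Young's inequality with the specific $\varepsilon$ of \eqref{defepsilon}, both resulting terms $I_1$, $I_2$, as well as the term $\int_{\Omega_{l,\rho/16}}G(x,u_{l,\rho}/\rho)\,dx$ arising when one passes from $\nabla u$ to $\nabla(u_{l,\rho}\eta)$ (see \eqref{eq3.13}), are estimated through \eqref{weakHrnckineq} (for $I_1$ one also needs the Caccioppoli inequality \eqref{sk 2.18} for negative powers of $\psi$). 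A purely local Caccioppoli argument with singular or logarithmic test functions, as you sketch, cannot replace this.

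Relatedly, your accounting of where the double exponential $\Lambda(\gamma,3n,\rho)$ comes from is not right. Chaining $({\rm g}_3)$ over the (boundedly many) dyadic scales between $\rho/4$ and $\rho$ only costs a fixed power of $e^{\lambda(\rho)}$, i.e.\ a single exponential. The factor $\exp\big(\gamma e^{3n\lambda(\rho)}\big)$ originates inside the proof of the weak Harnack inequality: the admissible integrability exponent there is $\delta_0\sim e^{-2n\lambda(\rho)}$ (see \eqref{ShSkr2.15}), and raising the uniform bound to the power $1/\delta_0$ produces $\exp\big(\gamma e^{2n\lambda(\rho)}\big)$-type constants, which then propagate into \eqref{estgrthgrad} and \eqref{eq1.7}. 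So the missing idea is not bookkeeping of $({\rm g}_3)$ applications but the measure-theoretic step (Trudinger/De Giorgi-type estimates of $\ln(\varkappa/u_{l,\rho})$, the inverse H\"older inequality, and the resulting weak Harnack bound at the boundary) that converts sup-type information on $u_{l,\rho}$ into the oscillation quantity $M_l(\rho)-M_l(\rho/4)+2b_0\rho$. The admissibility remarks at the end of your proposal are fine, and the final observation that the integrals of $G(x,|\nabla(u_{l,\rho}\eta)|)$ over $\Omega_{l,\rho/16}$ and $\Omega_{l,\rho}$ coincide is correct, but the core of the estimate is not established by your argument.
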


In turn, the proof of Theorem \ref{thgradest} essentially relies on the weak Harnack inequality
for solutions of Eq. \eqref{gellequation} at the boundary $\partial\Omega$.
Let us state the corresponding result.
In this case, we also use the notation
\begin{equation}\label{defintsrf}
\fint_{E}f\,dx:=|E|^{-1}\int_{E}f\,dx
\end{equation}
for any measurable set $E\subset \mathbb{R}^{n}$ with $|E|\neq 0$ and $f\in L^{1}(E)$,
where $|E|$ denotes the $n$-dimensional Lebesgue measure of $E$.
\begin{theorem}[boundary weak Harnack inequality]\label{thweakHarnineq}
Let hypotheses $({\rm g}_{0})$--$({\rm g}_{3})$ be fulfilled.
Then, for every $0<s<n/(n-1)$, the following inequality holds in terms of notation
\eqref{defLambda}, \eqref{defulrho} and \eqref{defintsrf}:
\begin{equation}\label{weakHrnckineq}
\bigg(\fint_{B_{\rho/8}(x_{0})}
g^{s}( x_{0},u_{l,\rho}/\rho )\,dx\bigg)^{1/s}
\leqslant\Lambda(\gamma,3n,\rho)\,
g\bigg( x_{0},\frac{M_{l}(\rho)-M_{l}(\rho/4)+2b_{0}\rho}{\rho} \bigg).
\end{equation}
Here a positive constant $\gamma$ additionally depends on $s$.
\end{theorem}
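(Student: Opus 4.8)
The plan is to adapt the classical De Giorgi--Moser iteration for supersolutions to the boundary setting, working with the function $u_{l,\rho}=M_l(\rho)-u_l+2b_0\rho$, which is a nonnegative supersolution of Eq.~\eqref{gellequation} in $B_\rho(x_0)\cap\Omega$ and which, crucially, has been extended by the constant $M_l(\rho)+2b_0\rho$ outside $\Omega$ (since $u_l\equiv 0$ there). The shift by $2b_0\rho$ guarantees that $u_{l,\rho}/\rho\geq 2b_0>b_0$, so that on the relevant range of values the $(p,q)$-bounds $({\rm g}_1)$, $({\rm g}_2)$ are available and $g(x_0,\cdot)$ behaves like a genuine $N$-function with doubling properties. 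First I would fix $x_0$, rescale to work on balls $B_r=B_r(x_0)$ with $\rho/8\le$ (the small ball) $\le\rho$, and record the frozen-coefficient comparison: by $({\rm g}_3)$, for $x_1,x_2\in B_r$ and ${\rm v}/r$ in the admissible range, $g(x_1,{\rm v}/r)\le c_2(M)e^{\lambda(r)}g(x_2,{\rm v}/r)$, so up to the factor $c_2(M)e^{\lambda(r)}$ one may replace $g(x,\cdot)$ by the frozen kernel $g(x_0,\cdot)$ everywhere inside $B_r$.

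Next I would derive the basic Caccioppoli-type energy inequality: testing \eqref{gelintidentity} with $\varphi=\eta^q\, w^{-\beta}$ (for suitable $\beta>0$ and a cutoff $\eta$), or equivalently with truncations of negative powers of $w:=u_{l,\rho}$, and using $({\rm g}_1)$--$({\rm g}_2)$ together with Young's inequality for $N$-functions, one obtains
\begin{equation*}
\int_{B_{r'}} G\!\left(x,\frac{|\nabla(\Phi(w)\eta)|}{1}\right)dx
\le \gamma\,e^{\gamma\lambda(r)}\int_{B_r}G\!\left(x,\frac{\Phi(w)\,|\nabla\eta|}{1}\right)dx
\end{equation*}
for appropriate monotone $\Phi$ (powers, or $\log$), where the $e^{\gamma\lambda(r)}$ comes from iterating the freezing step over nested balls. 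Combining this with the Sobolev inequality for the $N$-function $G(x_0,\cdot)$ (valid because $p>1$, hence $G(x_0,\cdot)$ satisfies the requisite growth to embed into a higher-integrability space, with Sobolev constant controlled by $n,p,q$), one gets a reverse-H\"older--type gain on $g(x_0,w/\rho)$. Here the constant multiplying the gain accumulates a factor $e^{\gamma\lambda(r_k)}$ at each step of a dyadic sequence of radii $r_k\downarrow \rho/8$; since $\lambda$ is non-increasing, all these are $\le\lambda(\rho/8)\le\lambda(\rho)$ up to adjusting constants, but the number of iterations needed to reach a fixed exponent $s<n/(n-1)$ is itself finite and controlled by the data, so the total is of the form $\exp(\gamma\,e^{3n\lambda(\rho)})=\Lambda(\gamma,3n,\rho)$ after absorbing the polynomially-many $e^{\lambda}$ factors — this is exactly where the double-exponential structure of $\Lambda$ and the choice $\beta=3n$ enter. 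The outcome of this half is an estimate
\begin{equation*}
\Big(\fint_{B_{\rho/8}}g^{s}(x_0,w/\rho)\,dx\Big)^{1/s}
\le \Lambda(\gamma,3n,\rho)\Big(\fint_{B_{\rho/4}}g^{s_0}(x_0,w/\rho)\,dx\Big)^{1/s_0}
\end{equation*}
for some small $s_0>0$, and then a crossover from small positive powers to a pointwise infimum bound.

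The second, harder half is to bound that infimum-type quantity by $g\big(x_0,(M_l(\rho)-M_l(\rho/4)+2b_0\rho)/\rho\big)$, i.e. to show $w$ cannot be too large on a large portion of $B_{\rho/4}$. The standard device is a logarithmic estimate: testing with $\varphi=\eta^q/w \cdot(\text{const})$ shows $\log w$ has bounded mean oscillation on $B_{\rho/4}$ with BMO-seminorm $\le\gamma e^{\gamma\lambda(\rho)}$, hence by John--Nirenberg $\exp(\pm\sigma\log w)$ are both integrable for some small $\sigma$, which bridges the positive and negative exponent Moser estimates. The main obstacle — and the place where the boundary hypothesis and the constant $b_0$ really matter — is controlling the ``numerator'' $M_l(\rho)-M_l(\rho/4)$: one must compare $\essinf_{B_{\rho/4}} w$ with $M_l(\rho)-M_l(\rho/4)+2b_0\rho$, using that $M_l(\rho/4)=\esssup_{B_{\rho/4}}u_l$ and $w=M_l(\rho)-u_l+2b_0\rho$, so $\essinf_{B_{\rho/4}}w = M_l(\rho)-M_l(\rho/4)+2b_0\rho$ by definition. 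Thus the right-hand side of \eqref{weakHrnckineq} is precisely $g(x_0,\essinf_{B_{\rho/4}}w/\rho)$, and the theorem reduces to the homogeneous weak Harnack chain $\big(\fint_{B_{\rho/8}}g^s(x_0,w/\rho)\big)^{1/s}\le \Lambda\, g(x_0,\essinf_{B_{\rho/4}}w/\rho)$, where the nonhomogeneity of $g$ forces one to track how $g(x_0,\cdot)$ interacts with averaging — resolved by the $\Delta_2$/$\nabla_2$ consequences of $({\rm g}_1)$, $({\rm g}_2)$, which let one pass $g(x_0,\cdot)$ in and out of integral means at the cost of constants depending only on $p,q,c_1$. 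Assembling the two halves and relabelling constants (with the extra $s$-dependence entering through the number of Moser iterations and the John--Nirenberg threshold) yields \eqref{weakHrnckineq}. I expect the delicate points to be: (a) the bookkeeping that turns the accumulated $e^{\lambda}$ factors into a single $\Lambda(\gamma,3n,\rho)$, which is the entire novelty over the logarithmic case; and (b) justifying the test functions rigorously in $W^{1,G}_0$ when $G$ is only a Musielak--Orlicz function (truncation and approximation arguments, plus boundedness of $u$).
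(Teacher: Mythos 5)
Your overall skeleton (negative-power Caccioppoli estimates, a Moser-type iteration up to exponents $s<n/(n-1)$, and the observation that $\essinf_{B_{\rho/4}}u_{l,\rho}=M_{l}(\rho)-M_{l}(\rho/4)+2b_{0}\rho$, so the right-hand side of \eqref{weakHrnckineq} is $g(x_{0},\essinf_{B_{\rho/4}}u_{l,\rho}/\rho)$) matches the paper's Lemma \ref{sk.lem 2.4} and the reduction behind Lemma \ref{lem2.1}. But your crossover step is a genuine gap: you bridge the positive and negative exponents by claiming that testing with $\varphi=\eta^{q}/w$ gives $\log w\in{\rm BMO}$ with seminorm $\leqslant\gamma e^{\gamma\lambda(\rho)}$ and then invoking John--Nirenberg. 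Under the non-logarithmic condition \eqref{eqnonlogcond} this is exactly what fails: to control the mean oscillation of $\log w$ on an arbitrary small ball $B_{r}\subset B_{\rho/4}$ the freezing step (${\rm g}_{3}$) costs a factor $e^{\lambda(r)}$, and since $\lambda$ is unbounded as $r\to0$ the BMO seminorm is not bounded by any quantity depending on $\lambda(\rho)$ alone. This is precisely the obstruction stated in Remark \ref{remHarnineq}, and it is why the paper abandons the Moser/John--Nirenberg route in favour of Trudinger's scheme: Lemma \ref{lem2.1} proves an $L^{\infty}$ bound $\esssup_{B_{\rho/4}}\ln(\varkappa/u_{l,\rho})\leqslant\gamma e^{3n\lambda(\rho)}$ by a De Giorgi iteration carried out at the single scale $\rho$ (so only $e^{\lambda(\rho)}$ enters), and Lemma \ref{ShSkrlem2.2} proves exponential integrability of $\ln(u_{l,\rho}/\varkappa)$ by estimating all moments $\fint v_{+}^{m}/m!$ and summing the series with the $\rho$-dependent exponent $\delta_{0}=\tfrac{1}{2\gamma}e^{-2n\lambda(\rho)}$ from \eqref{ShSkr2.15}. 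Without a replacement for John--Nirenberg of this kind, your chain does not close.

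The second problem is your bookkeeping of the constant. You assert that the number of iterations needed to pass from the small starting exponent to a fixed $s<n/(n-1)$ is finite and controlled by the data, and that absorbing "polynomially many" factors $e^{\lambda(\rho)}$ yields $\Lambda(\gamma,3n,\rho)$. This is internally inconsistent: a data-controlled number of factors $e^{\gamma\lambda(\rho)}$ produces only a single exponential $e^{\gamma\lambda(\rho)}$, never the double exponential $\exp(\gamma e^{3n\lambda(\rho)})$. In the paper the double exponential arises because the starting exponent $\delta_{1}=\delta_{0}/(q-1)$ in \eqref{defdelta1} is itself exponentially small in $\lambda(\rho)$, so the number of iteration steps in Lemma \ref{sk.lem 2.4} is of order $\lambda(\rho)$ and the accumulated constants are raised to powers summing to $\sim n^{2}/(\delta_{1}(n-1))\sim e^{2n\lambda(\rho)}$; this, together with the $e^{3n\lambda(\rho)}$ from Lemma \ref{lem2.1}, is what produces $\Lambda(\gamma,3n,\rho)$ and explains the dependence of $\gamma$ on $s$. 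Your argument neither generates nor accounts for this structure, so even granting your crossover step the claimed constant would not follow.
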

\begin{remark}\label{remHarnineq}
{\rm The interior weak and strong Harnack's inequalities for bounded solutions of
Eq. \eqref{gellequation} under hypotheses $({\rm g}_{0})$--$({\rm g}_{3})$
were established in \cite{ShSkrVoit} (see
also \cite{BenKhlifi, BenHarHastKarp} in the context of
generalized Orlicz growth and logarithmic condition).
But unlike \cite{BenKhlifi, BenHarHastKarp}, the traditional Moser method
\cite{Moser1960, Moser1961} is inapplicable for the proof of Theorem \ref{thweakHarnineq},
since checking whether
the logarithm of solutions belongs to the BMO space is a problem for the non-logarithmic condition
\eqref{eqnonlogcond} with unbounded $\lambda(r)$.
We use Trudinger's  arguments \cite{TrudingerArch71} adapted to Eq. \eqref{gellequation}
near the boundary $\partial\Omega$ (see Section \ref{SectHarnproof}).
}
\end{remark}


\section{A sufficient condition for the regularity of a boundary point:
 proof of Theorem \ref{thregpoint}}\label{proofregcond}

Let  $x_{0}\in \partial\Omega$, $f\in C(\overline{\Omega})\cap W^{1,G}(\Omega)$,
and let $u$ be a bounded weak solution  of Eq. \eqref{gellequation} satisfying the condition
$u-f\in W^{1,G}_{0}(\Omega)$.
We need to prove that $\lim\limits_{\Omega\ni x\rightarrow x_{0}}u=f(x_{0})$.
This equality will be established if we show that
$f(x_{0})\leqslant \lim\limits_{\rho\rightarrow0}\essinf\limits_{\Omega\cap B_{\rho}(x_{0})}u$
and
$\lim\limits_{\rho\rightarrow0}\esssup\limits_{\Omega\cap B_{\rho}(x_{0})} u\leqslant f(x_{0})$.
The proves of the both inequalities are completely similar and we will prove only the second one.
We argue by contradiction and assume that
\begin{equation}\label{Lfhyp}
L:=\lim\limits_{\rho\rightarrow0}\esssup\limits_{\Omega\cap B_{\rho}(x_{0})} u>f(x_{0}).
\end{equation}
Let $f(x_{0})<l<L$, then $\lim\limits_{\rho\rightarrow0}M_{l}(\rho)=L-l>0$ and $M_{l}(\rho)$
is bounded away from zero for sufficiently small $\rho$: there exist constants
$a, \rho_{\ast} \in (0,1)$ such that
\begin{equation}\label{eqMla}
M_{l}(\rho)>a>0 \ \ \text{for all} \ \rho\in(0,\rho_{\ast}).
\end{equation}
Let us prove the following inequality:
\begin{equation}\label{eq2.0}
\textrm{Cap}_{G}\big(\overline{B}_{\rho/32}(x_{0})\setminus\Omega, B_{\rho}(x_{0})  \big)
\leqslant a^{1-q} \Lambda(\gamma,3n,\rho)\rho^{n-1}
g\left( x_{0},\frac{M_{l}(\rho)-M_{l}(\rho/4)+2b_{0}\rho}{\rho} \right)
\end{equation}
for arbitrary fixed $\rho\in(0,\rho_{\ast})$.
To do this, we fix a function
$\eta\in C_{0}^{\infty}(B_{\rho/16}(x_{0}))$ such that
$0\leqslant\eta\leqslant1$, $\eta=1$ in $B_{3\rho/64}(x_{0})$ and
$|\nabla\eta|\leqslant 64/\rho$.
We set
$$
v_{l,\rho}=\frac{u_{l,\rho}\eta}{M_{l}(\rho)+2b_{0}\rho}
\quad \text{ and } \quad
E_{l,\rho/32}=\overline{B}_{\rho/32}(x_{0})\cap \overline{\{x\in \mathbb{R}^{n}:u_{l}(x)=0\}}.
$$
It is clear that $\overline{B}_{\rho/32}(x_{0})\setminus\Omega\subset E_{l,\rho/32}$ and
$v_{l,\rho}\in \mathfrak{M}(E_{l,\rho/32}, B_{\rho}(x_{0}))$,
therefore by Definition \ref{defcapacity} the following inequality holds:
\begin{equation}\label{eq2.1}
\textrm{Cap}_{G}\big(\overline{B}_{\rho/32}(x_{0})\setminus\Omega, B_{\rho}(x_{0})  \big)\leqslant
\textrm{Cap}_{G}\big(E_{l,\rho/32}, B_{\rho}(x_{0})  \big)
\leqslant
\int_{B_{\rho/16}(x_{0})} G(x, |\nabla v_{l,\rho}|)\,dx.
\end{equation}
The integral in \eqref{eq2.1} can be estimated as follows:
\begin{equation}\label{eq2.3}
\int_{\Omega_{l,\rho/16}}G(x, |\nabla v_{l,\rho}|)\,dx
\leqslant a^{1-q} \Lambda(\gamma, 3n,\rho)\rho^{n-1}
g\bigg( x_{0},\frac{M_{l}(\rho)-M_{l}(\rho/4)+2b_{0}\rho}{\rho} \bigg),
\end{equation}
that implies \eqref{eq2.0}.
Indeed, if $M_{l}(\rho)+2b_{0}\rho\geqslant1$, then by condition (${\rm g}_{2}$) and
\eqref{eq1.7} we have
$$
g\bigg(x, \frac{|\nabla (u_{l,\rho}\eta)|}{M_{l}(\rho)+2b_{0}\rho} \bigg)
\leqslant
(M_{l}(\rho)+2b_{0}\rho)^{1-p}
g(x, |\nabla (u_{l,\rho}\eta)|)
\ \ \text{in} \ \Omega_{l,\rho/16},
$$
$$
\begin{aligned}
&\int_{\Omega_{l,\rho/16}}G(x, |\nabla v_{l,\rho}|)\,dx
=
\int_{\Omega_{l,\rho/16}}
g\bigg(x, \frac{|\nabla (u_{l,\rho}\eta)|}{M_{l}(\rho)+2b_{0}\rho} \bigg)
\frac{|\nabla (u_{l,\rho}\eta)|}{M_{l}(\rho)+2b_{0}\rho}\,dx
\\
&\leqslant (M_{l}(\rho)+2b_{0}\rho)^{-p}
\int_{\Omega_{l,\rho/16}}
G(x, |\nabla(u_{l,\rho}\eta)|)\,dx
\\
&\leqslant
\Lambda(\gamma, 3n,\rho)(M_{l}(\rho)+2b_{0}\rho)^{1-p}\rho^{n-1}
g\bigg( x_{0},\frac{M_{l}(\rho)-M_{l}(\rho/4)+2b_{0}\rho}{\rho} \bigg)
\\
&\leqslant \Lambda(\gamma, 3n,\rho)\rho^{n-1}
g\bigg( x_{0},\frac{M_{l}(\rho)-M_{l}(\rho/4)+2b_{0}\rho}{\rho} \bigg),
\end{aligned}
$$
that implies \eqref{eq2.3}.
In the case $0< M_{l}(\rho)+2b_{0}\rho<1$, by conditions (${\rm g}_{1}$) and
(${\rm g}_{3}$), we have on the set $\{|\nabla (u_{l,\rho}\eta)|\geqslant b_{0}\}$:
$$
g\bigg(x, \frac{|\nabla (u_{l,\rho}\eta)|}{M_{l}(\rho)+2b_{0}\rho} \bigg)
\leqslant c_{1}(M_{l}(\rho)+2b_{0}\rho)^{1-q}
g(x, |\nabla (u_{l,\rho}\eta)|),
$$
and on the set $\{|\nabla (u_{l,\rho}\eta)|< b_{0}\}$:
$$
\begin{aligned}
&g\bigg(x, \frac{|\nabla (u_{l,\rho}\eta)|}{M_{l}(\rho)+2b_{0}\rho} \bigg)
\leqslant
g\bigg(x, \frac{b_{0}}{M_{l}(\rho)+2b_{0}\rho} \bigg)
\\
&\leqslant
c_{1} (M_{l}(\rho)+2b_{0}\rho)^{1-q}g(x,b_{0})
\\
&\leqslant
c_{1} (M_{l}(\rho)+2b_{0}\rho)^{1-q}
g\bigg(x, \frac{M_{l}(\rho)-M_{l}(\rho/4)+2b_{0}\rho}{\rho}\bigg)
\\
&\leqslant
\gamma e^{\lambda(\rho)} (M_{l}(\rho)+2b_{0}\rho)^{1-q}
g\bigg(x_{0}, \frac{M_{l}(\rho)-M_{l}(\rho/4)+2b_{0}\rho}{\rho}\bigg).
\end{aligned}
$$
Using these relations, \eqref{eq1.7} and \eqref{eqMla}, we obtain
$$
\begin{aligned}
&\int_{\Omega_{l,\rho/16}}
G(x, |\nabla v_{l,\rho}|)\,dx
\\
&=
\int_{\Omega_{l,\rho/16}\cap\{|\nabla v_{l,\rho}|\geqslant b_{0}\}}G(x, |\nabla v_{l,\rho}|)\,dx+
\int_{\Omega_{l,\rho/16}\cap\{|\nabla v_{l,\rho}|< b_{0}\}}G(x, |\nabla v_{l,\rho}|)\,dx
\\
&\leqslant
c_{1}(M_{l}(\rho)+2b_{0}\rho)^{-q}
\int_{\Omega_{l,\rho/16}}
G(x, |\nabla (u_{l,\rho}\eta)|)\,dx
\\
&\phantom{\leqslant\,}+\gamma e^{\lambda(\rho)}\rho^{n-1}(M_{l}(\rho)+2b_{0}\rho)^{1-q}
g\bigg(x_{0}, \frac{M_{l}(\rho)-M_{l}(\rho/4)+2b_{0}\rho}{\rho}\bigg)
\\
&\leqslant
a^{1-q}\Lambda(\gamma, 3n,\rho)\rho^{n-1}
g\bigg( x_{0},\frac{M_{l}(\rho)-M_{l}(\rho/4)+2b_{0}\rho}{\rho} \bigg),
\end{aligned}
$$
which again leads to \eqref{eq2.3}.
Thus, inequality \eqref{eq2.3}, and hence \eqref{eq2.0}, are completely proved.

Now, we can rewrite inequality \eqref{eq2.0} in the following form:
$$
g^{-1}_{x_{0}}
\bigg(a^{q-1}\Lambda(-\gamma,3n,\rho)\,
\frac{\textrm{Cap}_{G}\big(\overline{B}_{\rho/32}(x_{0})
\setminus\Omega, B_{\rho}(x_{0}) \big)}{\rho^{n-1}}
 \bigg)
\leqslant  %
\frac{M_{l}(\rho)-M_{l}(\rho/4)+2b_{0}\rho}{\rho}.
$$
Hence, setting
$$
\begin{aligned}
{\rm w}&=g^{-1}_{x_{0}}
\bigg(\Lambda(-\gamma,3n,\rho)\,
\frac{\textrm{Cap}_{G}\big(\overline{B}_{\rho/32}(x_{0})
\setminus\Omega, B_{\rho}(x_{0}) \big)}{\rho^{n-1}}
 \bigg),
\\
{\rm v}&=g^{-1}_{x_{0}}
\bigg(a^{q-1}\Lambda(-\gamma,3n,\rho)\,
\frac{\textrm{Cap}_{G}\big(\overline{B}_{\rho/32}(x_{0})
\setminus\Omega, B_{\rho}(x_{0}) \big)}{\rho^{n-1}}
 \bigg),
\end{aligned}
$$
and using \eqref{gpineq}, we derive the following:
$$
g^{-1}_{x_{0}}
\bigg(\Lambda(-\gamma,3n,\rho)\,
\frac{\textrm{Cap}_{G}\big(\overline{B}_{\rho/32}(x_{0})
\setminus\Omega, B_{\rho}(x_{0}) \big)}{\rho^{n-1}}
 \bigg)\leqslant
 a^{-\frac{q-1}{p-1}}\,
\frac{M_{l}(\rho)-M_{l}(\rho/4)+2b_{0}\rho}{\rho}.
$$
It is readily verified that
$$
\int_{0}\frac{M_{l}(\rho)-M_{l}(\rho/4)+2b_{0}\rho}{\rho}\,d\rho
<+\infty,
$$
and therefore the previous inequality gives
$$
\int_{0}g^{-1}_{x_{0}}
\bigg(\Lambda(-\gamma,3n,\rho)\,
\frac{\textrm{Cap}_{G}\big(B_{\rho/32}(x_{0})\setminus\Omega, B_{\rho}(x_{0}) \big)}{\rho^{n-1}}
 \bigg)\,d\rho<+\infty,
$$
which contradicts condition \eqref{Winercond} of Theorem \ref{thregpoint}.
Consequently, hypothesis \eqref{Lfhyp} is not correct, and the inequality
$\lim\limits_{\rho\rightarrow0}\esssup\limits_{\Omega\cap B_{\rho}(x_{0})} u\leqslant f(x_{0})$
is correct.
The inequality
$f(x_{0})\leqslant \lim\limits_{\rho\rightarrow0}\essinf\limits_{\Omega\cap B_{\rho}(x_{0})} u$
is proved similarly, which, together with the previous one, implies the equality
$f(x_{0})=\lim\limits_{\Omega\ni x\rightarrow x_{0}}u$.
The proof is complete.


\section{The weak Harnack inequality: proof of Theorem \ref{thweakHarnineq}}
\label{SectHarnproof}

In this section we prove Theorem \ref{thweakHarnineq}.
We assume that all the hypotheses and notation of this theorem
are in force. For definiteness, we also assume that $l\geqslant M_{f}(\rho)$ in \eqref{defulrho}.
In the case where $l\leqslant m_{f}(\rho)$, the proof is completely similar.
We need some inequalities and several lemmas.
First, we note simple analogues of Young's inequality:
\begin{equation}\label{gYoungineq1}
g(x,a)\,b\leqslant \varepsilon\, g(x,a)\,a+g(x,b/\varepsilon)\,b
\ \ \text{if} \ \varepsilon, a, b >0, \ x\in \overline{\Omega}.
\end{equation}
In fact, if $b\leqslant\varepsilon a$, then $g(x,a)\,b\leqslant \varepsilon\, g(x,a)\,a$,
and if $b>\varepsilon a$, then since the function ${\rm v}\rightarrow g(x, {\rm v})$
is increasing we have that $g(x,a)\,b\leqslant g(x,b/\varepsilon)\,b$,
which proves inequality \eqref{gYoungineq1}.

Next, we set
\begin{equation}\label{defG}
\mathcal{G}(x, {\rm w})=\int_{0}^{{\rm w}}g(x,{\rm v})\,d{\rm v}
\ \ \text{for} \ \ x\in\Omega, \ {\rm w}>0.
\end{equation}
The following inequalities hold:
\begin{equation}\label{G>gw}
\mathcal{G}(x,{\rm w})\geqslant \gamma\,G(x,{\rm w}) \ \
\text{for all} \ \ x\in\overline{\Omega}, \  {\rm w}\geqslant 2b_{0},
\end{equation}
\begin{equation}\label{gw>pG}
G(x,{\rm w})\geqslant p\,\mathcal{G}(x,{\rm w}) \ \
\text{for all} \ \ x\in \overline{\Omega}, \ {\rm w}> 0.
\end{equation}
Indeed, if $x\in\overline{\Omega}$ and ${\rm w}\geqslant 2b_{0}$, then
by \eqref{gqineq}, \eqref{deffuncgw} and \eqref{defG}, we have
$$
\mathcal{G}(x,{\rm w})= \int_{0}^{{\rm w}}g(x,{\rm v})\,d{\rm v}
\geqslant
\int_{b_{0}}^{{\rm w}}g(x,{\rm v})\,d{\rm v}\geqslant
\frac{g(x,{\rm w})}{c_{1}{\rm w}^{\,q-1}}
\int_{b_{0}}^{{\rm w}} {\rm v}^{\,q-1}d{\rm v}\geqslant
\frac{1-2^{-q}}{c_{1}q}\,G(x,{\rm w}),
$$
which implies \eqref{G>gw}.
Now, let $x\in \overline{\Omega}$ and ${\rm w}>0$ be arbitrary,
then by \eqref{gpineq}, \eqref{deffuncgw} and \eqref{defG} we obtain
$$
\mathcal{G}(x,{\rm w})= \int_{0}^{{\rm w}}g(x,{\rm v})\,d{\rm v}
\leqslant
\frac{g(x,{\rm w})}{{\rm w}^{p-1}}\int_{0}^{{\rm w}}
{\rm v}^{\,p-1}\,d{\rm v}=\frac{1}{p}\,g(x,{\rm w}){\rm w}=\frac{1}{p}\,G(x,{\rm w}),
$$
which yields \eqref{gw>pG}.

The rest of the lemmas in this section are successive stages in the proof of
Theorem \ref{thweakHarnineq}, which follows Trudinger's strategy \cite{TrudingerArch71}
adapted to Eq.\eqref{gellequation} near the boundary $\partial\Omega$
(see Remark \ref{remHarnineq}).

\begin{lemma}\label{lem2.1}
There exists positive constant $\gamma$ depending only on the known data such that
\begin{equation}\label{ShSkr2.4}
\exp\bigg( \fint_{B_{\rho/2}(x_{0})}\ln u_{l,\rho}\,dx \bigg)
\leqslant \Lambda(\gamma,3n,\rho)
\Big(M_{l}(\rho)-M_{l}(\rho/4)+2b_{0}\rho\Big).
\end{equation}
\end{lemma}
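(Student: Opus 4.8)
The plan is to obtain the estimate \eqref{ShSkr2.4} by combining a Caccioppoli-type inequality for the function $\ln u_{l,\rho}$ (or more precisely for $-\ln u_{l,\rho}$, since $u_{l,\rho}$ is comparable to a supremum minus the solution) with a Poincaré-type inequality, in the spirit of Trudinger. First I would record that $u_{l,\rho}=M_{l}(\rho)-u_{l}+2b_{0}\rho$ is a nonnegative supersolution-type quantity bounded below by $2b_{0}\rho$ and above by $M_{l}(\rho)+2b_{0}\rho$ on $B_{\rho}(x_{0})$, so that $\ln u_{l,\rho}$ is well defined and bounded there. Testing the integral identity \eqref{gelintidentity} with a test function of the form $\varphi=\zeta^{q}/u_{l,\rho}^{\,?}$ (a negative power of $u_{l,\rho}$ times a power of a cutoff $\zeta\in C_{0}^{\infty}(B_{3\rho/4}(x_{0}))$ with $\zeta=1$ on $B_{\rho/2}(x_{0})$, $|\nabla\zeta|\leqslant\gamma/\rho$) — extended by zero outside $\Omega$, which is admissible because $l\geqslant M_{f}(\rho)$ forces $u_{l}\in W_0^{1,G}$ near $x_0$ — produces after using Young's inequality \eqref{gYoungineq1} and the structure conditions a bound of the form
\begin{equation*}
\int_{B_{\rho/2}(x_{0})}G\!\left(x,\frac{|\nabla u_{l,\rho}|}{u_{l,\rho}}\right)dx
\leqslant \gamma\,\rho^{n-1}\,g\!\left(x_{0},\frac{M_{l}(\rho)-M_{l}(\rho/4)+2b_{0}\rho}{\rho}\right)\!\cdot(\dots),
\end{equation*}
where the nonlogarithmic factor $e^{\lambda(\rho)}$, and after iterating the chain of comparisons the factor $\Lambda(\gamma,3n,\rho)$, enters through $({\rm g}_{3})$ each time one replaces $g(x,\cdot)$ by $g(x_{0},\cdot)$.

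Next I would pass from this energy bound to an oscillation bound for $\ln u_{l,\rho}$. The key point is that $w:=\ln u_{l,\rho}$ satisfies $|\nabla w|=|\nabla u_{l,\rho}|/u_{l,\rho}$, so the Caccioppoli inequality above controls $\int_{B_{\rho/2}}G(x,|\nabla w|)\,dx$; by the lower growth condition $({\rm g}_{2})$ and Jensen/Hölder this controls $\int_{B_{\rho/2}}|\nabla w|^{p}\,dx$ up to the appropriate $\Lambda$-factors and a normalization, and then the Sobolev–Poincaré inequality on $B_{\rho/2}(x_{0})$ gives
\begin{equation*}
\fint_{B_{\rho/2}(x_{0})}\bigl|w-\langle w\rangle\bigr|\,dx\leqslant \gamma\,\rho\left(\fint_{B_{\rho/2}(x_{0})}|\nabla w|^{p}\,dx\right)^{1/p},
\end{equation*}
where $\langle w\rangle$ is the mean of $w$ over $B_{\rho/2}(x_{0})$. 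Combining with the Caccioppoli bound, the right-hand side is estimated by $\ln\Lambda(\gamma,3n,\rho)+\ln\!\bigl(\text{ratio of the two }g\text{-values}\bigr)$; since $g$ is increasing and by $({\rm g}_{1})$–$({\rm g}_{2})$ the quotient $g(x_0,(M_l(\rho)-M_l(\rho/4)+2b_0\rho)/\rho)\big/g(x_0,(M_l(\rho)+2b_0\rho)/\rho)$ — comparing numerator argument with the natural upper normalization — is bounded, this reduces to a clean estimate for $\langle\,\ln u_{l,\rho}\,\rangle - \ln(M_{l}(\rho)-M_{l}(\rho/4)+2b_{0}\rho)$ from above, and Jensen's inequality $\exp(\langle w\rangle)\leqslant\langle e^{w}\rangle$ together with $u_{l,\rho}\leqslant M_l(\rho)+2b_0\rho$ closes the loop; exponentiating yields exactly \eqref{ShSkr2.4}.

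The main obstacle, as flagged in Remark \ref{remHarnineq}, is that under the nonlogarithmic condition \eqref{eqnonlogcond} one cannot claim $\ln u_{l,\rho}\in\mathrm{BMO}$ with a uniform bound, so the classical John–Nirenberg step of Moser's method is unavailable; the care is all in tracking how many times $({\rm g}_{3})$ must be invoked (each invocation costing a factor $c_{2}(K)e^{\lambda(r)}$) and in checking that the cumulative loss is still of the form $\Lambda(\gamma,3n,\rho)=\exp(\gamma\exp(3n\lambda(\rho)))$ rather than something worse — in particular one must be scrupulous that the constant multiplying $\lambda(\rho)$ in the inner exponential does not exceed $3n$ and that the various borderline cases ($M_l(\rho)+2b_0\rho\gtrless1$, $|\nabla u_{l,\rho}|\gtrless b_0$) are all absorbed, exactly as in the analogous case-split carried out in Section \ref{proofregcond}. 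A secondary technical point is the admissibility of the singular test function $\zeta^{q}/u_{l,\rho}^{\,?}$ in $W^{1,G}_{0}$, which follows since $u_{l,\rho}$ is bounded away from zero and the cutoff localizes strictly inside $B_\rho(x_0)$.
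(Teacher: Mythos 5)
Your framework (a Caccioppoli estimate for the logarithm plus Poincar\'{e}, \`{a} la Trudinger) captures only part of what the lemma requires, and the missing part is the essential one. Writing $\varkappa=\exp\big(\fint_{B_{\rho/2}(x_{0})}\ln u_{l,\rho}\,dx\big)$ and $w=\ln(\varkappa/u_{l,\rho})$, inequality \eqref{ShSkr2.4} is equivalent to the \emph{pointwise} bound $\esssup_{B_{\rho/4}(x_{0})}w\leqslant\gamma e^{3n\lambda(\rho)}$ (the paper's \eqref{estmaxw}), because $\essinf_{B_{\rho/4}(x_{0})}u_{l,\rho}=M_{l}(\rho)-M_{l}(\rho/4)+2b_{0}\rho$. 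A Caccioppoli inequality for $\ln u_{l,\rho}$ combined with Sobolev--Poincar\'{e} yields only an $L^{1}$ (mean-oscillation) bound for $w$ on $B_{\rho/2}(x_{0})$, and such a bound cannot control $\essinf_{B_{\rho/4}}u_{l,\rho}$: the infimum may be attained on a set of arbitrarily small measure, exactly where an integral oscillation estimate says nothing. Your closing step via Jensen also runs in the wrong direction: $\exp\big(\fint\ln u_{l,\rho}\big)\leqslant\fint u_{l,\rho}\leqslant M_{l}(\rho)+2b_{0}\rho$ bounds the geometric mean by $M_{l}(\rho)+2b_{0}\rho$, which is trivial and useless here, since the whole point of the lemma (and of its use in the capacity estimate \eqref{eq2.0}) is the bound by the possibly much smaller quantity $M_{l}(\rho)-M_{l}(\rho/4)+2b_{0}\rho$. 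Relatedly, the factor $g\big(x_{0},(M_{l}(\rho)-M_{l}(\rho/4)+2b_{0}\rho)/\rho\big)$ displayed on the right-hand side of your Caccioppoli inequality has no source: testing the equation with (a corrected version of) $\zeta^{q}u_{l,\rho}^{-\tau}$ produces bounds of the type \eqref{ShSkrVoit2.8}, i.e. $\gamma\rho^{n}e^{\lambda(\rho)}$, and the quantity $M_{l}(\rho/4)$ never enters through the equation; it enters only through the identification of $\essinf_{B_{\rho/4}}u_{l,\rho}$ in the equivalence above.

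The paper fills precisely this gap with a De Giorgi level-set iteration for $w$ itself: it tests \eqref{gelintidentity} with $\varphi=u_{l,\rho}(w-k)_{+}\,\zeta^{\,q}/\mathcal{G}(x_{0},u_{l,\rho}/\rho)$ for levels $k$ above $\esssup_{\partial\Omega\cap B_{\rho}}w$ (which also settles admissibility, since the test function vanishes near the portion of $\partial\Omega$ where $u_{l}=0$; note that your $\zeta^{q}u_{l,\rho}^{-\tau}$, extended by zero, is \emph{not} in $W^{1,G}_{0}(\Omega)$ because $u_{l,\rho}\equiv M_{l}(\rho)+2b_{0}\rho$ where $u_{l}=0$, so a constant must be subtracted, as in the paper's other test functions). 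The resulting level-set energy inequality together with the standard iteration of Ladyzhenskaya--Ural'tseva gives $\esssup_{B_{\rho/4}}w\leqslant\gamma e^{2n\lambda(\rho)}\big(\fint_{B_{\rho/2}}|w|^{n/(n-1)}dx\big)^{(n-1)/n}+1$, and only at that point do the Poincar\'{e} inequality (with $(w)_{x_{0},\rho/2}=0$) and the Caccioppoli-type bounds \eqref{intDwbyintGG}, \eqref{ShSkrVoit2.8} enter, producing the single-exponential bound $\gamma e^{3n\lambda(\rho)}$ for $\esssup_{B_{\rho/4}}w$ and hence the doubly exponential factor $\Lambda(\gamma,3n,\rho)$ after exponentiation. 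Without a sup-estimate of this kind (De Giorgi here, or Moser iteration on negative powers in the classical scheme, which is exactly the John--Nirenberg-free substitute the non-logarithmic setting demands), your argument cannot reach \eqref{ShSkr2.4}.
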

\begin{proof}
Let
\begin{equation}\label{deffnctionw}
w=\ln\frac{\varkappa}{ u_{l,\rho} },
\end{equation}
where the function $u_{l,\rho}$ is defined by \eqref{defulrho}
and the constant $\varkappa$ is defined by the condition
\begin{equation}\label{overline{u}}
(w)_{x_{0},\rho/2}=\fint_{B_{\rho/2}(x_{0})}w\,dx=0,
\ \ \text{i.e.} \ \
\varkappa=\exp \bigg( \fint_{B_{\rho/2}(x_{0})}
\ln u_{l,\rho}\,dx \bigg).
\end{equation}
Taking into account \eqref{defulrho}, \eqref{deffnctionw} and \eqref{overline{u}},
it is easy to see that inequality \eqref{ShSkr2.4} is equivalent to the following estimate:
\begin{equation}\label{estmaxw}
\esssup\limits_{B_{\rho/4}(x_{0})} w\leqslant\gamma e^{3n\lambda(\rho)}.
\end{equation}
The idea of using an upper bound of auxiliary logarithmic functions goes back
to Moser \cite{Moser1960, Moser1961} and has become a useful tool in the qualitative
theory of partial differential equations \cite{GilbTr, LadUr}.
In this paper, in order to prove \eqref{estmaxw}, we use  the approach of  De\,Giorgi
\cite{DeGiorgi1957, LadUr} in the spirit of our recent studies \cite{ShSkrVoit, SkrVoitarXiv20}.

We fix $\sigma\in(0,1)$, and for any $\rho/4\leqslant r<r(1+\sigma)\leqslant\rho/2$ we
take a function  $\zeta\in C_{0}^{\infty}(B_{r(1+\sigma)}(x_{0}))$,
$0\leqslant\zeta\leqslant1$, $\zeta=1$ in $B_{r}(x_{0})$ and
$|\nabla \zeta|\leqslant(\sigma r)^{-1}$.
Let
\begin{equation}\label{kcondition}
k\geqslant \gamma e^{2n\lambda(\rho)}
\bigg( \fint_{B_{\rho/2}(x_{0})} |w|^{\frac{n}{n-1}}\,dx\bigg)^{\frac{n-1}{n}}+1.
\end{equation}
From \eqref{deffnctionw}, \eqref{kcondition} it follows that
\begin{equation}\label{kmoreln}
k>\ln \dfrac{\varkappa}{M_{l}(\rho)+2b_{0}\rho}
=\esssup\limits_{\partial\Omega\cap B_{\rho}(x_{0})}w.
\end{equation}
We test \eqref{gelintidentity} by the function
\begin{equation}\label{testfunct1}
\varphi=\begin{cases}
\dfrac{u_{l,\rho}\,(w-k)_{+}}
{\mathcal{G}\left(x_{0}, u_{l,\rho}/\rho \right)}\,\zeta^{\,q}
& \text{on } \Omega\cap B_{r(1+\sigma)}(x_{0}), \\
0
& \text{otherwise}.
\end{cases}
\end{equation}
Since we are dealing with bounded solutions,
then this function 
and all other test functions used in the article
belong to $W^{1,G}_{0}(\Omega)$.
This is a consequence of conditions {\rm (${\rm g}_{0}$)}, {\rm (${\rm g}_{1}$)},
the result of Marcus and Mizel \cite[Theorem~2]{MarcMizel} and the notion of the weak inequality
on the boundary $\partial\Omega$ \cite{GarZiem, LitStWei}.
So, after substitution \eqref{testfunct1} into \eqref{gelintidentity}, we have
$$
\begin{aligned}
&\int_{A_{k,r(1+\sigma)}}\frac{G(x,|\nabla u|)}
{\mathcal{G}\left( x_{0}, u_{l,\rho}/\rho \right)}\,\zeta^{\,q}\,dx
\\
&+\int_{A_{k,r(1+\sigma)}}\frac{G(x,|\nabla u|)}
{\mathcal{G}\left( x_{0}, u_{l,\rho}/\rho \right)}
\left\{ \frac{G(x_{0}, u_{l,\rho}/\rho)}
{\mathcal{G}(x_{0}, u_{l,\rho}/\rho)}-1 \right\}(w-k)_{+}\,\zeta^{\,q}\,dx
\\
&\leqslant \frac{\gamma}{\sigma}\int_{A_{k,r(1+\sigma)}}
\frac{g(x,|\nabla u|)}
{\mathcal{G}\left( x_{0}, u_{l,\rho}/\rho \right)}\,
\frac{u_{l,\rho}}{\rho}\,(w-k)_{+}\,\zeta^{\,q-1}\,dx,
\end{aligned}
$$
where $A_{k,r(1+\sigma)}=\Omega\cap B_{r(1+\sigma)}(x_{0})\cap \{w>k\}$
and  the embedding $A_{k,r(1+\sigma)}\subset \Omega_{l,r(1+\sigma)}$
is true due to \eqref{kmoreln} (see \eqref{defOmegalr} for the definition of
$\Omega_{l,r(1+\sigma)}$).
By \eqref{gw>pG}, the value in curly brackets is estimated from below as follows:
\begin{equation}\label{GmthclGp-1}
\frac{G(x_{0}, u_{l,\rho}/\rho)}
{\mathcal{G}(x_{0}, u_{l,\rho}/\rho)}-1\geqslant p-1,
\end{equation}
and therefore
\begin{equation}\label{Voit1}
\begin{aligned}
\int_{A_{k,r(1+\sigma)}}\frac{G(x,|\nabla u|)}
{\mathcal{G}\left( x_{0}, u_{l,\rho}/\rho \right)}\,\zeta^{\,q}\,dx+
(p-1) \int_{A_{k,r(1+\sigma)}}\frac{G(x,|\nabla u|)}
{\mathcal{G}\left( x_{0}, u_{l,\rho}/\rho \right)}\,(w-k)_{+}\,\zeta^{\,q}\,dx
\\
\leqslant \gamma\int_{A_{k,r(1+\sigma)}}
\frac{g(x,|\nabla u|)}
{\mathcal{G}\left( x_{0}, u_{l,\rho}/\rho \right)}\,
\frac{u_{l,\rho}}{\sigma\rho\,\zeta}\,(w-k)_{+}\,\zeta^{\,q}\,dx.
\end{aligned}
\end{equation}
We use inequality \eqref{gYoungineq1} with $a=|\nabla u|$,
$b=\dfrac{u_{l,\rho}}{\sigma\rho\,\zeta}$ and
sufficiently small $\varepsilon>0$, and then \eqref{G>gw} with
${\rm w}=u_{l,\rho}/\rho$,
to estimate from above the right-hand side of \eqref{Voit1}:
\begin{multline*}\label{Voit1}
\gamma\int_{A_{k,r(1+\sigma)}}
\frac{g(x,|\nabla u|)}
{\mathcal{G}\left( x_{0}, u_{l,\rho}/\rho \right)}\,
\frac{u_{l,\rho}}{\sigma\rho\,\zeta}\,(w-k)_{+}\,\zeta^{\,q}\,dx
\\
\leqslant
\frac{p-1}{2}\int_{A_{k,r(1+\sigma)}}
\frac{G(x,|\nabla u|)}
{\mathcal{G}\left( x_{0}, u_{l,\rho}/\rho \right)}\,
(w-k)_{+}\,\zeta^{\,q}\,dx
\\
+\frac{\gamma}{\sigma}\int_{A_{k,r(1+\sigma)}}
\frac{g\big( x,  \frac{\gamma\,u_{l,\rho}}{\sigma\rho\,\zeta} \big)}
{g\left( x_{0}, u_{l,\rho}/\rho \right)}(w-k)_{+}\,\zeta^{\,q-1}\,dx.
\end{multline*}
Combining this inequality and \eqref{Voit1}, 
we obtain that
\begin{equation}\label{ineqGGgg}
\int_{A_{k,r(1+\sigma)}}
\frac{G(x,|\nabla u|)}{\mathcal{G}\left( x_{0}, u_{l,\rho}/\rho \right)}\,\zeta^{\,q}\,dx
\leqslant
\frac{\gamma}{\sigma}\int_{A_{k,r(1+\sigma)}}
\frac{g\big( x,  \frac{\gamma\,u_{l,\rho}}{\sigma\rho\,\zeta} \big)}
{g\left( x_{0}, u_{l,\rho}/\rho \right)}\,(w-k)_{+}\,\zeta^{\,q-1}\,dx.
\end{equation}
Since
$
\dfrac{\gamma\,u_{l,\rho}}{\sigma\rho\,\zeta}
\geqslant
\dfrac{u_{l,\rho}}{\rho}
\geqslant 2b_{0}
$
and $|x-x_{0}|<r(1+\sigma)< \rho$\, for $x\in A_{k,r(1+\sigma)}$,
then using conditions (${\rm g}_{1}$) and (${\rm g}_{3}$), we get
that for all $x\in  A_{k,r(1+\sigma)}$ there holds:
$$
g\left(x,  \frac{\gamma\,u_{l,\rho}}{\sigma\rho\,\zeta}\right)
\leqslant\gamma\,(\sigma\zeta)^{1-q}\,
g\left(x,u_{l,\rho}/\rho\right)
\leqslant
\gamma\,(\sigma\zeta)^{1-q}\,e^{\lambda(\rho)}
g\left(x_{0},u_{l,\rho}/\rho \right).
$$
So, from \eqref{ineqGGgg} we obtain
\begin{equation}\label{ShSkr2.5}
\int_{A_{k,r(1+\sigma)}}
\frac{G(x,|\nabla u|)}{\mathcal{G}\left( x_{0}, u_{l,\rho}/\rho \right)}\,\zeta^{\,q}\,dx
\leqslant
\gamma\,\sigma^{-q}\,e^{\lambda(\rho)}
\int_{A_{k,r(1+\sigma)}} (w-k)_{+}\,dx.
\end{equation}

To estimate the term on the left-hand side of \eqref{ShSkr2.5}, we use \eqref{gYoungineq1} with
$\varepsilon=1$, $a=u_{l,\rho}/\rho$, $b=|\nabla u|$, assumption
(${\rm g}_{3}$), the definitions of the functions $G$, $\mathcal{G}$, $w$
(see equalities \eqref{deffuncgw}, \eqref{defG} and \eqref{deffnctionw}, respectively)
and \eqref{gw>pG}:
\begin{equation}\label{eqShSkr2.6}
\begin{aligned}
\int_{A_{k,r(1+\sigma)}}|\nabla w|\,\zeta^{\,q}\,dx
&=
\int_{A_{k,r(1+\sigma)}} \frac{|\nabla u|}{u_{l,\rho}}\,
\frac{g\left(x,u_{l,\rho}/\rho\right)}{g\left(x,u_{l,\rho}/\rho\right)}\,\zeta^{\,q}\,dx
\\
&\leqslant \frac{1}{\rho}\,|A_{k,r(1+\sigma)}|+
\frac{1}{\rho}\int\limits_{A_{k,r(1+\sigma)}}
\frac{G(x,|\nabla u|)}{G\left( x, u_{l,\rho}/\rho \right)}\,\zeta^{\,q}\,dx
\\
&\leqslant \frac{1}{\rho}\,|A_{k,r(1+\sigma)}|+
\gamma \frac{e^{\lambda(\rho)}}{\rho}
\int_{A_{k,r(1+\sigma)}}
\frac{G(x,|\nabla u|)}{\mathcal{G}\left( x_{0}, u_{l,\rho}/\rho \right)}\,\zeta^{\,q}\,dx.
\end{aligned}
\end{equation}
Collecting \eqref{ShSkr2.5} and \eqref{eqShSkr2.6}, we obtain
$$
\int_{A_{k,r(1+\sigma)}}|\nabla w|\,\zeta^{\,q}\,dx\leqslant
\frac{\gamma}{\sigma^{q}}\,\frac{e^{2\lambda(\rho)}}{\rho}
\bigg(|A_{k,r(1+\sigma)}|+ \int_{A_{k,r(1+\sigma)}} (w-k)_{+}\,dx\bigg).
$$
From this, using Sobolev's embedding theorem, standard iteration arguments
(see e.g. \cite[Section~2, Theorem~5.3]{LadUr})
and condition \eqref{kcondition} on $k$, we obtain that
\begin{equation}\label{ShSkr2.7}
\esssup\limits_{B_{\rho/4}(x_{0})}w\leqslant
\gamma\,e^{2n\lambda(\rho)}
\bigg(\fint_{B_{\rho/2}(x_{0})}
|w|^{\,\frac{n}{n-1}}\,dx\bigg)^{\frac{n-1}{n}}+1.
\end{equation}

In order to estimate the right-hand side of \eqref{ShSkr2.7} we use the Poincar\'{e} inequality.
By our choice of $\varkappa$ in \eqref{overline{u}} we have
\begin{equation}\label{Poincarewn/n-1}
\begin{aligned}
\bigg(\fint_{B_{\rho/2}(x_{0})}
|w|^{\,\frac{n}{n-1}}\,dx\bigg)^{\frac{n-1}{n}}
=\bigg(&\fint_{B_{\rho/2}(x_{0})}
|w-(w)_{x_{0},\rho/2}|^{\,\frac{n}{n-1}}\,dx\bigg)^{\frac{n-1}{n}}
\\
&\leqslant \gamma\,\rho^{1-n}
\int_{B_{\rho/2}(x_{0})}|\nabla w|\,dx.
\end{aligned}
\end{equation}
Next, similarly to \eqref{eqShSkr2.6}, we have
\begin{equation}\label{intDwbyintGG}
\int_{B_{\rho/2}(x_{0})}|\nabla w|\,dx
\leqslant
\int_{B_{\rho}(x_{0})}|\nabla w|\,\zeta^{\,q}\,dx
\leqslant \gamma\rho^{n-1}+\gamma\,\frac{e^{\lambda(\rho)}}{\rho}
\int_{\Omega_{l,\rho}}
\frac{G(x,|\nabla u|)}
{\mathcal{G}\left( x_{0}, u_{l,\rho}/\rho \right)}\,\zeta^{\,q}\,dx,
\end{equation}
where we have
$\zeta\in C_{0}^{\infty}(B_{\rho}(x_{0}))$,
$0\leqslant\zeta\leqslant1$, $\zeta=1$ in $B_{\rho/2}(x_{0})$, and
$|\nabla\zeta|\leqslant 2/\rho$.
In addition, testing \eqref{gelintidentity} by
$$
\varphi=\begin{cases}
\bigg(\dfrac{u_{l,\rho}}{\mathcal{G}\left( x_{0}, u_{l,\rho}/\rho\right)}
-\dfrac{M_{l}(\rho)+2b_{0}\rho}
{\mathcal{G}\left( x_{0}, M_{l}(\rho)\rho^{-1}+2b_{0}\right)}\bigg)
\zeta^{\,q} & \text{on } \Omega\cap B_{\rho}(x_{0}), \\
0
& \text{otherwise},
\end{cases}
$$
similarly to \eqref{ShSkr2.5}, we obtain
\begin{equation}\label{ShSkrVoit2.8}
\int_{\Omega_{l,\rho}}
\frac{G(x,|\nabla u|)}{\mathcal{G}\left( x_{0}, u_{l,\rho}/\rho \right)}\,\zeta^{\,q}\,dx
\leqslant \gamma \rho^{n}e^{\lambda(\rho)}.
\end{equation}
Now, collecting \eqref{ShSkr2.7}--\eqref{ShSkrVoit2.8},
we arrive at the required inequality \eqref{estmaxw}.
The proof of the lemma is complete.
\end{proof}

\begin{lemma}\label{ShSkrlem2.2}
There exists a positive number
$\delta_{0}=\delta_{0}(\rho)$ depending only on the data and $\rho$,
such that
\begin{equation}\label{eqShSkr2.9}
\bigg(\fint_{B_{\rho/4}(x_{0})}
u_{l,\rho}^{\delta_{0}}\,dx\bigg)^{1/\delta_{0}}
\leqslant \Lambda(\gamma,2n,\rho)\exp
\bigg( \fint_{B_{\rho/2}(x_{0})}
\ln u_{l,\rho}\,dx\bigg).
\end{equation}
\end{lemma}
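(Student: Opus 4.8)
The plan is to deduce \eqref{eqShSkr2.9} from Lemma~\ref{lem2.1} by a standard John--Nirenberg / Bombieri--Giusti type argument applied to the function $w=\ln(\varkappa/u_{l,\rho})$, where $\varkappa$ is as in \eqref{overline{u}}. The key point is that the estimate \eqref{estmaxw}, $\esssup_{B_{\rho/4}(x_{0})}w\leqslant\gamma e^{3n\lambda(\rho)}$, says that $w$ is bounded \emph{above} on $B_{\rho/4}(x_{0})$; together with a matching exponential integrability bound for $(w)_{+}$ this will give that $e^{\delta_{0}w}$ is integrable for suitably small $\delta_{0}$, and since $u_{l,\rho}=\varkappa e^{-w}$ this is exactly an upper bound for $\dashint u_{l,\rho}^{\,-?}$\dots{} — more precisely, one rewrites $\big(\dashint_{B_{\rho/4}}u_{l,\rho}^{\delta_{0}}\big)^{1/\delta_{0}}=\varkappa\big(\dashint_{B_{\rho/4}}e^{-\delta_{0}w}\big)^{1/\delta_{0}}$ and must bound $\dashint e^{-\delta_{0}w}$ by a constant. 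Since $-w$ may still be large where $u_{l,\rho}$ is small, the control on $-w=\ln(u_{l,\rho}/\varkappa)$ from above is not automatic and this is where the Caccioppoli-type estimate on $|\nabla w|$ from the proof of Lemma~\ref{lem2.1} re-enters.

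Concretely, I would proceed as follows. First, reuse the gradient bound already obtained inside the proof of Lemma~\ref{lem2.1}: combining \eqref{intDwbyintGG} with \eqref{ShSkrVoit2.8} yields
\begin{equation*}
\fint_{B_{\rho/2}(x_{0})}|\nabla w|\,dx\leqslant \gamma\,\frac{e^{2\lambda(\rho)}}{\rho},
\end{equation*}
hence by the Poincar\'e inequality (as in \eqref{Poincarewn/n-1}) $w\in BMO(B_{\rho/2}(x_{0}))$ with $\|w\|_{BMO}\leqslant\gamma e^{2\lambda(\rho)}$; note this bound must be tracked carefully since it is not scale-invariant in the non-logarithmic regime — the factor $e^{2\lambda(\rho)}$ is exactly what produces the $\Lambda(\gamma,2n,\rho)$ in the conclusion. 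Second, apply the John--Nirenberg inequality: there is an absolute $c>0$ so that $\dashint_{B_{\rho/2}}\exp\!\big(c\,|w-(w)_{x_{0},\rho/2}|/\|w\|_{BMO}\big)\,dx\leqslant\gamma$, and since $(w)_{x_{0},\rho/2}=0$ by the choice of $\varkappa$ this gives $\dashint_{B_{\rho/2}}e^{\delta_{0}|w|}\,dx\leqslant\gamma$ with $\delta_{0}=\delta_{0}(\rho):=c\,e^{-2\lambda(\rho)}/\gamma$, i.e. precisely a constant depending only on the data and on $\rho$ through $\lambda(\rho)$. Third, restrict to $B_{\rho/4}(x_{0})\subset B_{\rho/2}(x_{0})$, drop the absolute value, and write
\begin{equation*}
\fint_{B_{\rho/4}(x_{0})}u_{l,\rho}^{\delta_{0}}\,dx
=\varkappa^{\delta_{0}}\fint_{B_{\rho/4}(x_{0})}e^{-\delta_{0}w}\,dx
\leqslant 2^{n}\,\varkappa^{\delta_{0}}\fint_{B_{\rho/2}(x_{0})}e^{\delta_{0}|w|}\,dx
\leqslant \gamma\,\varkappa^{\delta_{0}},
\end{equation*}
so that, taking $\delta_{0}$-th roots and recalling $\varkappa=\exp\big(\dashint_{B_{\rho/2}}\ln u_{l,\rho}\big)$ from \eqref{overline{u}}, one gets \eqref{eqShSkr2.9} with the constant $\gamma^{1/\delta_{0}}$ — and since $1/\delta_{0}\leqslant\gamma e^{2\lambda(\rho)}$, this is bounded by $\exp(\gamma e^{2\lambda(\rho)})\leqslant\exp\big(\gamma e^{2n\lambda(\rho)}\big)=\Lambda(\gamma,2n,\rho)$, matching the claimed form.

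The main obstacle is bookkeeping the dependence of every constant on $\lambda(\rho)$: in the classical (logarithmic) setting $\|w\|_{BMO}$ is bounded by a pure constant and $\delta_{0}$ is a fixed number, whereas here $\|w\|_{BMO}$ carries the factor $e^{2\lambda(\rho)}$, which forces $\delta_{0}$ to degenerate like $e^{-2\lambda(\rho)}$ as $\rho\to0$ and, after taking the $\delta_{0}$-th root, re-inflates the final constant to the double-exponential quantity $\Lambda(\gamma,2n,\rho)$. One must also be a little careful that $\delta_{0}$ stays below the John--Nirenberg threshold uniformly — this is automatic once we take $\delta_{0}=c e^{-2\lambda(\rho)}/\gamma$ with $\gamma$ large — and that $u_{l,\rho}\geqslant 2b_{0}\rho>0$ so that $\ln u_{l,\rho}$ and the negative power $u_{l,\rho}^{\delta_{0}}$ are genuinely integrable; both are built into the definition \eqref{defulrho}. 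Apart from these quantitative points, the argument is the standard one and uses only Lemma~\ref{lem2.1}, the Caccioppoli inequality already derived in its proof, Poincar\'e, and John--Nirenberg.
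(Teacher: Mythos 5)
The decisive step in your argument --- that $w\in BMO(B_{\rho/2}(x_{0}))$ with seminorm $\leqslant\gamma e^{2\lambda(\rho)}$ --- is precisely the point that fails under the non-logarithmic hypothesis, and it does not follow from what you invoke. Combining \eqref{intDwbyintGG} with \eqref{ShSkrVoit2.8} gives the $L^{1}$-gradient bound for $w$ on the \emph{single} ball $B_{\rho/2}(x_{0})$, hence (via Poincar\'{e}) one mean-oscillation bound at the top scale only; the John--Nirenberg inequality requires the mean oscillation to be controlled uniformly over all sub-balls $B_{r}(y)\subset B_{\rho/2}(x_{0})$. If you repeat the Caccioppoli argument of Lemma~\ref{lem2.1} on a small ball $B_{r}(y)$ keeping the normalization $\mathcal{G}\left(x_{0},u_{l,\rho}/\rho\right)$, the cut-off term produces through (${\rm g}_{1}$) an extra factor $(\rho/r)^{q-1}$, and the resulting estimate $r\fint_{B_{r}(y)}|\nabla w|\,dx\leqslant\gamma e^{2\lambda(\rho)}(\rho/r)^{q-1}$ is not scale-invariant; if instead you renormalize at scale $r$, condition (${\rm g}_{3}$) forces the factor $e^{\lambda(r)}$, which is unbounded as $r\to0$ since $\lambda$ is non-increasing and unbounded. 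Either way the BMO seminorm of $w$ is not bounded by $\gamma e^{2\lambda(\rho)}$; this is exactly the obstruction recorded in Remark~\ref{remHarnineq}, namely that verifying membership of the logarithm of solutions in BMO is problematic for the non-logarithmic condition \eqref{eqnonlogcond}, which is why the traditional Moser route is abandoned here.

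The paper circumvents John--Nirenberg by Trudinger's device: with $v=\ln(u_{l,\rho}/\varkappa)$ and the truncations $v_{\mu}=\max\{v,\mu\}$ of \eqref{defvmu}, one tests \eqref{gelintidentity} with $v_{\mu}^{s-1}(\cdots)\zeta^{\,\theta}$ and chooses $\mu$ proportional to $s$ (namely $s/\mu=(p-1)/2$) so that the term carrying $v_{\mu}^{s-2}$ is absorbed; Sobolev's inequality then yields the iteration \eqref{ShSkr2.11}, whose output is the bound $\big(\fint_{B_{j}}v_{\mu_{j}}^{s_{j}}\,dx\big)^{1/s_{j}}\leqslant\gamma e^{2n\lambda(\rho)}$ uniformly in $j$. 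Summing the series $\sum_{m}(\delta_{0}v_{+})^{m}/m!$ with $\delta_{0}=\tfrac{1}{2\gamma}e^{-2n\lambda(\rho)}$ as in \eqref{ShSkr2.15} gives $\fint_{B_{\rho/4}(x_{0})}e^{\delta_{0}v}\,dx\leqslant2\gamma$, which is exactly the exponential integrability you tried to extract from John--Nirenberg. Your final bookkeeping (rewriting in terms of $\varkappa$ and absorbing $\gamma^{1/\delta_{0}}$ into $\Lambda(\gamma,2n,\rho)$) agrees with the paper's conclusion, so to repair the proof you must replace the BMO/John--Nirenberg step by an iteration of this type on the truncated logarithms rather than assert an all-scales oscillation bound that the non-logarithmic condition does not provide.
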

\begin{proof}
Let's fix $\sigma\in(0,1)$ and for any $\rho/4\leqslant r<r(1+\sigma)\leqslant \rho/2$
consider the function $\zeta\in C_{0}^{\infty}\big(B_{r(1+\sigma)}(x_{0})\big)$,
$0\leqslant\zeta\leqslant 1$, $\zeta=1$ in $B_{r}(x_{0})$,
$|\nabla \zeta|\leqslant (\sigma r)^{-1}$.
We define
\begin{equation}\label{defvmu}
v=\ln\frac{u_{l,\rho}}{\varkappa}, \quad
v_{\mu}=\max\{v,\,\mu\}, \quad \mu>0.
\end{equation}
Testing \eqref{gelintidentity} by
$$
\varphi=\begin{cases}
v_{\mu}^{s-1}\,\bigg(\dfrac{u_{l,\rho}}{\mathcal{G}\left( x_{0}, u_{l,\rho}/\rho\right)}
-\dfrac{M_{l}(\rho)+2b_{0}\rho}
{\mathcal{G}\left( x_{0}, M_{l}(\rho)\rho^{-1}+2b_{0}\right)}\bigg)\,
\zeta^{\,\theta} & \text{on } \Omega\cap B_{r(1+\sigma)}(x_{0}), \\
0
& \text{otherwise},
\end{cases}
$$
where $s\geqslant 1$, $\theta\geqslant q$,
and using \eqref{GmthclGp-1}, we have
$$
\begin{aligned}
( p-1)&\int_{\Omega_{l,r(1+\sigma)}}
\frac{G(x,|\nabla u|)}{\mathcal{G}\left( x_{0}, u_{l,\rho}/\rho \right)}\,
v_{\mu}^{s-1}\,\zeta^{\,\theta}\,dx
\\
&\leqslant
(s-1)\int_{\{v>\mu\}\cap\Omega_{l,r(1+\sigma)}}
\frac{G(x,|\nabla u|)}{\mathcal{G}\left( x_{0}, u_{l,\rho}/\rho \right)}\,
v_{\mu}^{s-2}\,\zeta^{\,\theta}\,dx
\\
& \ \ \ \ \ \ +\gamma\, \theta\int_{\Omega_{l,r(1+\sigma)}}
\frac{g(x,|\nabla u|)}
{\mathcal{G}\left( x_{0}, u_{l,\rho}/\rho \right)}\,
\frac{u_{l,\rho}}{\sigma\rho\,\zeta}\,
 v_{\mu}^{s-1}\,\zeta^{\,\theta}\,dx.
\end{aligned}
$$
Choosing $\mu$ from the condition $\dfrac{s}{\mu}=\dfrac{p-1}{2}$
and using inequalities \eqref{gYoungineq1}, \eqref{G>gw} and conditions (${\rm g}_{1}$)
and (${\rm g}_{3}$) similarly to the derivation of \eqref{ShSkr2.5},
from the previous we obtain
\begin{equation}\label{ShSkr2.10}
\int_{\Omega_{l,r(1+\sigma)}}
\frac{G(x,|\nabla u|)}{\mathcal{G}\left( x_{0}, u_{l,\rho}/\rho \right)}\,
v_{\mu}^{s-1}\,\zeta^{\,\theta}\,dx
\leqslant \frac{\gamma\,\theta^{\gamma}e^{\lambda(\rho)}}{\sigma^{q}}
\int_{B_{r(1+\sigma)}(x_{0})} v_{\mu}^{s-1}\,\zeta^{\,\theta-q}\,dx.
\end{equation}
Estimating from below the term on the left-hand side of \eqref{ShSkr2.10}, similarly to \eqref{eqShSkr2.6},
we obtain
\begin{multline*}
\int_{B_{r(1+\sigma)}(x_{0})}
|\nabla v_{\mu}|\,v_{\mu}^{s-1}\,\zeta^{\,\theta}\,dx
\leqslant
\int_{\Omega_{l,r(1+\sigma)}}
\frac{|\nabla u|}{u_{l,\rho}}\,v_{\mu}^{s-1}\,\zeta^{\,\theta}\,dx
\\
\leqslant
\frac{\gamma\,\theta^{\gamma}}{\sigma^{q}}\,\frac{e^{2\lambda(\rho)}}{\rho}
\int_{B_{r(1+\sigma)}(x_{0})} v_{\mu}^{s-1}\,\zeta^{\,\theta-q}\,dx
\leqslant
\frac{\gamma\,\theta^{\gamma}}{\sigma^{q}}\,\frac{e^{2\lambda(\rho)}}{\rho}
\int_{B_{r(1+\sigma)}(x_{0})} v_{\mu}^{s}\,\zeta^{\,\theta-q}\,dx.
\end{multline*}
Using Sobolev's embedding theorem from this we have
\begin{equation}\label{ShSkr2.11}
\fint_{B_{r}(x_{0})}
v_{\mu}^{\frac{sn}{n-1}}dx\leqslant
\bigg( \frac{\gamma s\,e^{2\lambda(\rho)}}{\sigma^{q}}\,
\fint_{B_{r(1+\sigma)}(x_{0})}v_{\mu}^{s}\, dx\bigg)^{\frac{n}{n-1}}.
\end{equation}
For $j=0,1,2,\ldots$ , we define the following sequences:
\begin{equation}\label{defmuj}
\begin{aligned}
r_{j}&=\dfrac{\rho}{4}(1+2^{-j}), \quad \ \ B_{j}=B_{r_{j}}(x_{0}),
\\
s_{j}&=\bigg(\frac{n}{n-1}\bigg)^{j+1},\quad
\mu_{j}=\frac{2s_{j}}{p-1}, \quad
y_{j}=\bigg(\fint_{B_{j}}
v_{\mu_{j}}^{s_{j}}\,dx\bigg)^{1/s_{j}}.
\end{aligned}
\end{equation}
Then inequality \eqref{ShSkr2.11} can be rewritten in the form
\begin{equation}\label{ShSkr2.12}
y_{j+1}\leqslant \left(\gamma\,2^{jq}
s_{j}\,e^{2\lambda(\rho)}\right)^{1/s_{j}} y_{j},
\quad j=0,1,2,\ldots.
\end{equation}
In addition, by \eqref{deffnctionw}, \eqref{defvmu}, \eqref{Poincarewn/n-1}--\eqref{ShSkrVoit2.8},
for $j=0$, we have
\begin{equation}\label{ShSkr2.13}
y_{0}\leqslant \gamma\mu_{0}+\gamma
\bigg( \fint_{B_{\rho/2}(x_{0})}|w|^{\frac{n}{n-1}}\,dx \bigg)^{\frac{n-1}{n}}
\leqslant \gamma e^{2\lambda(\rho)}.
\end{equation}
Iterating \eqref{ShSkr2.12} and taking into account \eqref{ShSkr2.13},
for $j=0,1,2,\ldots$, we have
\begin{equation}\label{ShSkr2.14}
y_{j+1}
\leqslant y_{0}
\gamma^{\,\sum\limits_{i=0}^{j} \frac{1}{s_{i}}}\,
2^{q\sum\limits_{i=1}^{j}\frac{i}{s_{i}}}
\Big(\frac{n}{n-1}\Big)^{\sum\limits_{i=0}^{j}\frac{i+1}{s_{i}}}
\exp\bigg(2\lambda(\rho) \sum\limits_{i=0}^{j}\frac{1}{s_{i}} \bigg)
\leqslant \gamma e^{2n\lambda(\rho)}.
\end{equation}
Let $m\in \mathbb{N}$ be arbitrary, then there exists $j\geqslant 1$ such that
$s_{j-1}<m\leqslant s_{j}$.
Using H\"{o}lder's inequality, 
from \eqref{defmuj}, \eqref{ShSkr2.14} we obtain
$$
\fint_{B_{\rho/4}(x_{0})}
\frac{v_{+}^{\,m}}{m!}\,dx
\leqslant
\fint_{B_{\rho/4}(x_{0})}
\frac{v_{\mu_{j}}^{\,m}}{m!}\,dx
\leqslant
\frac{\gamma\,y_{j}^{m}}{m!}
\leqslant \frac{\gamma^{m+1}}{m!}\,e^{2nm\lambda(\rho)}
\leqslant \gamma^{m+1}e^{2nm\lambda(\rho)}.
$$
Choosing $\delta_{0}=\delta_{0}(\rho)$ from the condition
\begin{equation}\label{ShSkr2.15}
\delta_{0}=\frac{1}{2\gamma}\,e^{-2n\lambda(\rho)},
\end{equation}
from the previous we have
$$
\fint_{B_{\rho/4}(x_{0})}
\frac{(\delta_{0}v_{+})^{m}}{m\,!}\,dx\leqslant \gamma\,2^{-m},
$$
which implies that
$$
\fint_{B_{\rho/4}(x_{0})}
e^{\delta_{0}v}\,dx
\leqslant
\fint_{B_{\rho/4}(x_{0})}
e^{\delta_{0}v_{+}}\,dx
\leqslant
\sum\limits_{m=0}^{\infty}\,
\fint_{B_{\rho/4}(x_{0})}
\frac{(\delta_{0}v_{+})^{m}}{m!}\,dx\leqslant 2\gamma.
$$
From this, since
$e^{\delta_{0}v}= (u_{l,\rho}/\varkappa)^{\delta_{0}}$
we have
$$
\bigg(\fint_{B_{\rho/4}(x_{0})}
u_{l,\rho}^{\,\delta_{0}}\,dx\bigg)^{1/\delta_{0}}
\leqslant (2\gamma)^{1/\delta_{0}} \varkappa
\leqslant \Lambda(\gamma, 2n, \rho)\,\varkappa,
$$
that together with \eqref{overline{u}} yields the desired inequality \eqref{eqShSkr2.9}.
This completes the proof of the lemma.
\end{proof}

The next lemma is a simple consequence of Lemmas \ref{lem2.1}
and \ref{ShSkrlem2.2}.

\begin{lemma}\label{lemShSkrVoit2.3}
The following inequality holds:
\begin{equation}\label{ShSkr2.16}
\bigg(\fint_{B_{\rho/4}(x_{0})}
g^{\delta_{1}}\left(x_{0}, u_{l,\rho}/\rho\right)\,dx\bigg)
^{1/\delta_{1}}
\leqslant \Lambda(\gamma, 3n, \rho)\,
g\left(x_{0}, \frac{M_{l}(\rho)-M_{l}(\rho/4)+2b_{0}\rho}{\rho}\right),
\end{equation}
where
\begin{equation}\label{defdelta1}
\delta_{1}=\delta_{0}/(q-1),
\end{equation}
and $\delta_{0}$ is defined by \eqref{ShSkr2.15}.
\end{lemma}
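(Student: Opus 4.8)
The plan is to combine the two preceding lemmas after converting the integral of a power of $u_{l,\rho}$ into the integral of a power of $g(x_0,\cdot)$. First I would set $\delta_1=\delta_0/(q-1)$ as prescribed and observe that, by condition $({\rm g}_1)$ applied with ${\rm w}=u_{l,\rho}/\rho$ and a fixed reference value (using that $u_{l,\rho}/\rho\geqslant 2b_0\geqslant b_0$ on the relevant set), one has the two-sided control $g(x_0,u_{l,\rho}/\rho)\leqslant \gamma\, (u_{l,\rho}/\rho)^{q-1}\, g(x_0,2b_0)$ and, via $({\rm g}_2)$, a matching lower bound. The upshot is that $g^{\delta_1}(x_0,u_{l,\rho}/\rho)$ is comparable (up to a constant $\gamma$ and a factor built from $g(x_0,2b_0)$, which is itself controlled by the data) to $\rho^{-\delta_0}u_{l,\rho}^{\delta_0}$ on $B_{\rho/4}(x_0)$; where $u_{l,\rho}/\rho<2b_0$ is impossible by definition of $u_{l,\rho}$, so no case splitting is needed there. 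Hence
\[
\bigg(\fint_{B_{\rho/4}(x_0)}g^{\delta_1}(x_0,u_{l,\rho}/\rho)\,dx\bigg)^{1/\delta_1}
\leqslant \gamma\, g\!\left(x_0,\frac{1}{\rho}\bigg(\fint_{B_{\rho/4}(x_0)}u_{l,\rho}^{\delta_0}\,dx\bigg)^{1/\delta_0}\right),
\]
after absorbing the constants into the monotone function $g(x_0,\cdot)$ using $({\rm g}_1)$ once more to move the multiplicative constant outside.

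Next I would feed in Lemma \ref{ShSkrlem2.2} to bound $\big(\fint_{B_{\rho/4}}u_{l,\rho}^{\delta_0}\big)^{1/\delta_0}$ by $\Lambda(\gamma,2n,\rho)\exp\big(\fint_{B_{\rho/2}}\ln u_{l,\rho}\big)$, and then Lemma \ref{lem2.1} to bound the exponential of the logarithmic average by $\Lambda(\gamma,3n,\rho)\big(M_l(\rho)-M_l(\rho/4)+2b_0\rho\big)$. Composing, the argument of $g(x_0,\cdot)$ becomes
\[
\frac{1}{\rho}\,\Lambda(\gamma,2n,\rho)\,\Lambda(\gamma,3n,\rho)\big(M_l(\rho)-M_l(\rho/4)+2b_0\rho\big).
\]
Since $\Lambda(\gamma,2n,\rho)\leqslant\Lambda(\gamma,3n,\rho)$ (the exponent $\beta$ appears inside an increasing exponential and $\lambda\geqslant 0$), the product of the two $\Lambda$-factors is at most $\Lambda(\gamma,3n,\rho)^2=\Lambda(2\gamma,3n,\rho)$, which is again of the form $\Lambda(\gamma,3n,\rho)$ after renaming $\gamma$.

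Finally I would pull the factor $\Lambda(\gamma,3n,\rho)\geqslant 1$ out of $g(x_0,\cdot)$: by $({\rm g}_1)$, $g(x_0, t\, s)\leqslant c_1 t^{q-1} g(x_0,s)$ for $t\geqslant 1$ and $s\geqslant b_0$ (here $s=(M_l(\rho)-M_l(\rho/4)+2b_0\rho)/\rho\geqslant 2b_0$), so
\[
g\!\left(x_0,\Lambda(\gamma,3n,\rho)\,\frac{M_l(\rho)-M_l(\rho/4)+2b_0\rho}{\rho}\right)
\leqslant \Lambda(\gamma,3n,\rho)^{q-1}\, g\!\left(x_0,\frac{M_l(\rho)-M_l(\rho/4)+2b_0\rho}{\rho}\right),
\]
and $\Lambda(\gamma,3n,\rho)^{q-1}=\Lambda((q-1)\gamma,3n,\rho)$ is absorbed into $\Lambda(\gamma,3n,\rho)$. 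This yields \eqref{ShSkr2.16}. The only mildly delicate point is the first step: making sure the passage from $u_{l,\rho}^{\delta_0}$ to $g^{\delta_1}(x_0,\cdot)$ uses only $({\rm g}_1)$–$({\rm g}_2)$ and the structural lower bound $u_{l,\rho}/\rho\geqslant 2b_0$, so that all constants generated are genuine data constants; everything after that is bookkeeping with the $\Lambda$-notation. $\qed$
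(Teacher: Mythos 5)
Your overall architecture (feed Lemma \ref{ShSkrlem2.2} into Lemma \ref{lem2.1}, track the $\Lambda$-factors, and pull constants out of $g(x_{0},\cdot)$ with $({\rm g}_{1})$) matches the paper, but your first step contains a genuine gap: you claim that $g^{\delta_{1}}(x_{0},u_{l,\rho}/\rho)$ is \emph{comparable} to $\rho^{-\delta_{0}}u_{l,\rho}^{\delta_{0}}$ (times data constants) because $({\rm g}_{1})$ gives an upper bound with exponent $q-1$ and $({\rm g}_{2})$ gives ``a matching lower bound''. It does not: $({\rm g}_{2})$ only gives the exponent $p-1\leqslant q-1$. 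Your displayed inequality
\begin{equation*}
\Big(\fint_{B_{\rho/4}(x_{0})}g^{\delta_{1}}(x_{0},u_{l,\rho}/\rho)\,dx\Big)^{1/\delta_{1}}
\leqslant \gamma\, g\Big(x_{0},\tfrac{1}{\rho}\big(\fint_{B_{\rho/4}(x_{0})}u_{l,\rho}^{\delta_{0}}\,dx\big)^{1/\delta_{0}}\Big)
\end{equation*}
needs, after the $({\rm g}_{1})$ step with reference value $2b_{0}$, the reverse bound $g(x_{0},A)\geqslant \gamma^{-1}(A/2b_{0})^{q-1}g(x_{0},2b_{0})$ at $A=\big(\fint (u_{l,\rho}/\rho)^{\delta_{0}}\big)^{1/\delta_{0}}$, which is false in general. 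For the double phase model $g(x,{\rm v})={\rm v}^{p-1}+a(x){\rm v}^{q-1}$ at a point with $a(x_{0})=0$ one has $g(x_{0},A)=A^{p-1}$, so forcing the step leaves an extra factor of order $(A/2b_{0})^{q-p}\sim \rho^{-(q-p)}$; under the non-logarithmic hypothesis (e.g. $\lambda(r)=L\ln\ln\ln r^{-1}$) this is \emph{not} absorbable into $\Lambda(\gamma,3n,\rho)=\exp\big(\gamma e^{3n\lambda(\rho)}\big)$, which grows slower than any negative power of $\rho$. So the conclusion of your first step, with a pure data constant $\gamma$, does not follow from $({\rm g}_{1})$--$({\rm g}_{2})$.

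The missing idea is to avoid any lower bound on $g$ altogether by normalizing with the \emph{target} quantity from the start: on $B_{\rho/4}(x_{0})$ one has $u_{l}\leqslant M_{l}(\rho/4)$, hence $u_{l,\rho}\geqslant M_{l}(\rho)-M_{l}(\rho/4)+2b_{0}\rho$, so $({\rm g}_{1})$ applies with ${\rm w}=u_{l,\rho}/\rho\geqslant {\rm v}=\big(M_{l}(\rho)-M_{l}(\rho/4)+2b_{0}\rho\big)/\rho\geqslant 2b_{0}$ and gives, using only the one-sided estimate and $\delta_{0}=(q-1)\delta_{1}$,
\begin{equation*}
\fint_{B_{\rho/4}(x_{0})}\frac{g^{\delta_{1}}(x_{0},u_{l,\rho}/\rho)}{g^{\delta_{1}}(x_{0},{\rm v})}\,dx
\leqslant c_{1}^{\delta_{1}}\fint_{B_{\rho/4}(x_{0})}
\Big(\frac{u_{l,\rho}}{M_{l}(\rho)-M_{l}(\rho/4)+2b_{0}\rho}\Big)^{\delta_{0}}dx .
\end{equation*}
Lemmas \ref{lem2.1} and \ref{ShSkrlem2.2} bound the $\delta_{0}$-mean on the right by $\Lambda(\gamma,3n,\rho)^{\delta_{0}}$, and raising to the power $1/\delta_{1}=(q-1)/\delta_{0}$ only raises $\Lambda$ to the fixed power $q-1$ (this is exactly why $\delta_{1}=\delta_{0}/(q-1)$ is chosen), yielding \eqref{ShSkr2.16} directly; no factor ever needs to be pulled back inside $g(x_{0},\cdot)$. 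Alternatively, your route can be repaired by comparing with the level $A$ itself and splitting $B_{\rho/4}(x_{0})$ according to $u_{l,\rho}/\rho\lessgtr A$ (monotonicity of $g$ on one piece, $({\rm g}_{1})$ on the other), at the harmless price of a $\Lambda(\gamma,2n,\rho)$-type constant in place of $\gamma$; but as written, the comparability claim is the step that fails.
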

\begin{proof}
By condition (${\rm g}_{1}$) we have
\begin{multline*}
\fint_{B_{\rho/4}(x_{0})}
\frac{g^{\delta_{1}}\left(x_{0}, u_{l,\rho}/\rho\right)}
{g^{\delta_{1}}\left(x_{0}, \frac{M_{l}(\rho)-M_{l}(\rho/4)+2b_{0}\rho}{\rho}\right)}\,dx
\leqslant c_{1}^{\delta_{1}}
\fint_{B_{\rho/4}(x_{0})}
\left(\frac{u_{l,\rho}}{M_{l}(\rho)-M_{l}(\rho/4)+2b_{0}\rho}\right)
^{\delta_{0}}\,dx.
\end{multline*}
By Lemmas \ref{lem2.1} and \ref{ShSkrlem2.2} the right-hand side of
this inequality is estimated from above as follows:
$$
\fint_{B_{\rho/4}(x_{0})}
\left(\frac{u_{l,\rho}}{M_{l}(\rho)-M_{l}(\rho/4)+2b_{0}\rho}\right)
^{\delta_{0}}\,dx
\leqslant \Lambda(\gamma,3n,\rho),
$$
which proves the lemma.
\end{proof}

To complete the proof of Theorem \ref{thweakHarnineq} we need the following lemma.

\begin{lemma}[inverse H\"{o}lder inequality]\label{sk.lem 2.4}
Let $\delta_{1}\leqslant s<n/(n-1)$,
where the number $\delta_{1}$ is defined by \eqref{ShSkr2.15} and \eqref{defdelta1}.
Then the following inequality holds:
\begin{equation}\label{sk 2.17}
\bigg(\fint_{B_{\rho/8}(x_0)} g^s\left(x_0, u_{l,\rho}/\rho\right) dx
\bigg)^{1/s}
\leqslant
\Lambda(\gamma, 2n+1, \rho)
\bigg(\fint_{B_{\rho/4}(x_0)} g^{\delta_1}
\left(x_0, u_{l,\rho}/\rho\right) dx \bigg)^{1/\delta_1}.
\end{equation}
\end{lemma}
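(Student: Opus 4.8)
The plan is to establish a reverse H\"older inequality for the quantity $g(x_0, u_{l,\rho}/\rho)$ across dyadically related balls, using the De\,Giorgi / Moser-type iteration already employed in the previous lemmas, but now run ``downward'' in the exponent rather than upward. First I would fix $\delta_1\leqslant s<n/(n-1)$ and, for $\rho/8\leqslant r<r(1+\sigma)\leqslant\rho/4$, choose the standard cutoff $\zeta\in C_0^\infty(B_{r(1+\sigma)}(x_0))$ with $\zeta=1$ on $B_r(x_0)$, $|\nabla\zeta|\leqslant(\sigma r)^{-1}$. The key is to test the integral identity \eqref{gelintidentity} with a function of the form $\varphi=\psi(u_{l,\rho})\,\zeta^{\,\theta}$, where $\psi$ is built from a suitable negative (or small-positive) power of $u_{l,\rho}$ divided by $\mathcal{G}(x_0,u_{l,\rho}/\rho)$, mirroring the test functions \eqref{testfunct1} and the ones used in Lemmas \ref{lem2.1}--\ref{ShSkrlem2.2}. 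After applying Young's inequality \eqref{gYoungineq1}, inequalities \eqref{G>gw}--\eqref{gw>pG}, and conditions $({\rm g}_1)$, $({\rm g}_3)$ to transfer the $x$-dependence in $g(x,\cdot)$ to the frozen point $x_0$ at the cost of a factor $e^{\lambda(\rho)}$, one obtains a Caccioppoli-type inequality controlling $\int |\nabla(g^{\beta}(x_0,u_{l,\rho}/\rho))|$-type quantities on $B_r$ by $\sigma^{-q}e^{c\lambda(\rho)}$ times an integral of a lower power of $g(x_0,u_{l,\rho}/\rho)$ on $B_{r(1+\sigma)}$.

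Next I would invoke the Sobolev embedding (in the $L^1$-gradient form, as in \eqref{Poincarewn/n-1} and the line after \eqref{ShSkr2.11}) to upgrade this to a reverse-H\"older step:
\begin{equation*}
\bigg(\fint_{B_r(x_0)} g^{\chi\beta}(x_0,u_{l,\rho}/\rho)\,dx\bigg)^{1/\chi}
\leqslant \frac{\gamma\,e^{c\lambda(\rho)}}{\sigma^{q}}\,
\fint_{B_{r(1+\sigma)}(x_0)} g^{\beta}(x_0,u_{l,\rho}/\rho)\,dx,
\end{equation*}
with $\chi=n/(n-1)$. I would then set up a Moser iteration on a dyadic sequence of radii shrinking from $\rho/4$ down to $\rho/8$ and exponents $\beta_j$ chosen so that the product telescopes: starting from exponent $\delta_1$ and climbing by the factor $\chi$ at each stage. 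The radii $r_j=\tfrac{\rho}{8}(1+2^{-j})$ give $\sigma_j\sim 2^{-j}$, so the accumulated constant is $\prod_j(\gamma\,2^{jq}\,e^{c\lambda(\rho)})^{1/\beta_j}$; since $\sum_j \beta_j^{-1}<\infty$ and $\sum_j j\beta_j^{-1}<\infty$, this product is bounded by $\gamma\,\exp(c'\lambda(\rho)\sum_j\beta_j^{-1})$, i.e. by $\Lambda(\gamma, 2n+1,\rho)$ after collecting exponents — exactly the form claimed in \eqref{sk 2.17}. A slight subtlety is that the iteration should be stopped at the finite exponent $s$ (which lies strictly below the Sobolev exponent $n/(n-1)$), so one uses H\"older's inequality on the last step to pass from the reached exponent $\beta_{j_0}\geqslant s$ down to $s$, as in the passage from \eqref{ShSkr2.14} to the final estimate in Lemma \ref{ShSkrlem2.2}.

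The main obstacle, as in all the preceding lemmas, is the non-logarithmic term: every application of $({\rm g}_3)$ to replace $g(x,\cdot)$ by $g(x_0,\cdot)$ costs a multiplicative $e^{\lambda(\rho)}$, and these factors must be tracked through the entire iteration so that the accumulated loss is still of the admissible form $\Lambda(\gamma,2n+1,\rho)=\exp\big(\gamma\exp((2n+1)\lambda(\rho))\big)$ rather than something worse. This forces the iteration constant at each step to carry its own $e^{c\lambda(\rho)}$, and the bookkeeping $\sum_j \beta_j^{-1}=\sum_j(\tfrac{n-1}{n})^{j}\cdot(\text{const})<\infty$ is what keeps the total exponent of the form $c\,e^{(2n+1)\lambda(\rho)}$ after the geometric-series summation — precisely the reason the constant $2n+1$ appears rather than $2n$ or $3n$. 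A secondary technical point is that, because we are working with possibly negative powers of $u_{l,\rho}$, one must verify that the test functions remain admissible (bounded, in $W^{1,G}_0$), which is guaranteed by $u_{l,\rho}\geqslant 2b_0\rho>0$ together with the boundedness of $u$ and the Marcus--Mizel theorem, exactly as noted after \eqref{testfunct1}. Once \eqref{sk 2.17} is in hand, combining it with Lemma \ref{lemShSkrVoit2.3} immediately yields the boundary weak Harnack inequality \eqref{weakHrnckineq} of Theorem \ref{thweakHarnineq}.
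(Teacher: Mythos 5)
Your plan is essentially the paper's own proof: a Caccioppoli-type estimate obtained by testing \eqref{gelintidentity} with a negative power ($\tau\in(0,1)$) of the frozen quantity $\psi(x_0,u_{l,\rho}/\rho)=\mathcal{G}(x_0,u_{l,\rho}/\rho)\,\rho/u_{l,\rho}$ (which is comparable to $g(x_0,u_{l,\rho}/\rho)$ by \eqref{g<gammapsi}--\eqref{psi<g}), cf.\ \eqref{sk 2.18}; then the $L^1$-Sobolev inequality giving the single reverse-H\"older step \eqref{sk 2.21}; then a finite Moser chain between $B_{\rho/4}$ and $B_{\rho/8}$ whose length and exponent sums are governed by $1/\delta_1\sim e^{2n\lambda(\rho)}$, which is exactly why the constant is $\Lambda(\gamma,2n+1,\rho)$ --- your bookkeeping of the $e^{\lambda(\rho)}$ losses matches the paper's. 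Two points in your sketch need repair, though both are minor. First, admissibility near the boundary is not settled by boundedness plus Marcus--Mizel alone: since the cutoff ball straddles $\partial\Omega$, the test function must vanish where $u_l=0$, which the paper arranges by subtracting the constant value, i.e.\ taking $\varphi=[\psi^{-\tau}(x_0,u_{l,\rho}/\rho)-\psi^{-\tau}(x_0,\rho^{-1}M_l(\rho)+2b_0)]\zeta^{\,\theta}$ (the analogue of choosing $k$ above $\esssup_{\partial\Omega\cap B_\rho}w$ in \eqref{testfunct1}); your generic ``power of $u_{l,\rho}$ over $\mathcal{G}$'' does not have this property as stated. Second, the direction of your exponent chain is off: each iteration step \eqref{sk 2.21} is only valid when the \emph{target} exponent satisfies $t<n/(n-1)$ (equivalently $\tau=1-t(n-1)/n\in(0,1)$), so if you start at $\delta_1$ and multiply by $\chi=n/(n-1)$ the last step may overshoot to an exponent $\beta_{j_0}<s\chi$, which can exceed $n/(n-1)$ when $s>1$ and is then not reachable by this method. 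The fix is what the paper does: anchor the chain at $s$ with exponents $t_i=s\big(\tfrac{n-1}{n}\big)^i$ descending (so every target is $\leqslant s<n/(n-1)$) and use H\"older at the small-exponent end to pass from $t_{j+1}$ up to $\delta_1$, rather than H\"older at the top end. With these adjustments your argument coincides with the paper's.
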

\begin{proof}
We set
\begin{equation}\label{defpsi}
\psi(x, {\rm w})={\rm w}^{-1}\mathcal{G}(x, {\rm w}) \ \ \text{for} \
x\in\overline{\Omega}, \ {\rm w}>0,
\end{equation}
 and note that by \eqref{G>gw} and  \eqref{gw>pG} we have
\begin{equation}\label{g<gammapsi}
g(x,{\rm w})\leqslant \gamma\,\psi(x,{\rm w}) \ \
\text{for all} \ \ x\in\overline{\Omega}, \  {\rm w}\geqslant 2b_{0},
\end{equation}
\begin{equation}\label{psi<g}
\psi(x, {\rm w})\leqslant \frac{1}{p}\, g(x, {\rm w}) \ \
\text{for all} \ \ x\in \overline{\Omega}, \ {\rm w}> 0,
\end{equation}
which gives
\begin{equation}\label{psi'<psi/w}
\psi'_{{\rm w}} (x, {\rm w})\leqslant \gamma\,
\frac{\psi(x, {\rm w})}{{\rm w}} \ \
\text{for all} \ \ x\in\overline{\Omega}, \  {\rm w}\geqslant 2b_{0},
\end{equation}
\begin{equation}\label{psi'>psi/w}
\psi'_{{\rm w}} (x, {\rm w})
=\frac{g(x, {\rm w})-\psi(x, {\rm w})}{{\rm w}}
\geqslant (p-1)\, \frac{\psi(x, {\rm w})}{{\rm w}}
\ \
\text{for all} \ \ x\in \overline{\Omega}, \ {\rm w}> 0.
\end{equation}
We need a Cacciopoli-type inequality for negative powers of
$\psi\left(x_0, u_{l,\rho}/\rho\right)$.
To establish it, we fix $\sigma\in (0, 1)$ and $r>0$ such that
$\rho/8 \leqslant r< r(1+\sigma)\leqslant \rho/4$, and
take a function $\zeta\in C_0^{\infty}\left(B_{r(1+\sigma)}(x_0)\right)$,
$0\leqslant \zeta \leqslant 1$, $\zeta=1$ in $B_{r}(x_0)$,
$|\nabla \zeta|\leqslant (\sigma r)^{-1}$.
Testing \eqref{gelintidentity} by
$$
\varphi=\begin{cases}
\left[\psi^{-\tau} (x_{0}, u_{l,\rho}/\rho)
-\psi^{-\tau}(x_{0}, \rho^{-1}M_{l}(\rho)+2b_{0})
\right]
\zeta^{\,\theta} & \text{on } \Omega\cap B_{r(1+\sigma)}(x_{0}), \\
0
& \text{otherwise},
\end{cases}
$$
where $0<\tau<1$, $\theta\geqslant q$,
and using \eqref{psi'>psi/w} and the properties of $\zeta$, we obtain
the following inequality:
\begin{equation*}
\begin{aligned}
(p-1)\,\tau \int_{\Omega_{l,r(1+\sigma)}}
&\psi^{-\tau}\left(x_0, u_{l,\rho}/\rho\right)
\frac{G(x, |\nabla u|)}{u_{l,\rho}}\,\zeta^{\,\theta}\, dx
\\
\leqslant\frac{\gamma\, \theta }{\sigma\rho}\int_{\Omega_{l,r(1+\sigma)}}
&\psi^{-\tau} \left(x_0, u_{l,\rho}/\rho\right) g(x, |\nabla u|)\,\zeta^{\,\theta-1}\, dx,
\end{aligned}
\end{equation*}
which by \eqref{gYoungineq1}, (${\rm g}_{1}$), (${\rm g}_{3}$) and \eqref{g<gammapsi} implies
the following:
\begin{equation}\label{sk 2.18}
\begin{aligned}
&\int_{\Omega_{l,r(1+\sigma)}}
\psi^{-\tau}\left(x_0,u_{l,\rho}/\rho\right)
\frac{G(x,|\nabla u|)}{u_{l,\rho}}\,\zeta^{\,\theta}\,dx
 \\
& \leqslant
\frac{\gamma\, \theta^{\,q}}{(\sigma \tau)^{q}}\,
\frac{e^{\lambda(\rho)}}{\rho}
\int_{B_{r(1+\sigma)}(x_0)} \psi^{1-\tau}
\left(x_0, u_{l,\rho}/\rho\right) \zeta^{\,\theta-q}\, dx.
\end{aligned}
\end{equation}

Basing on inequality \eqref{sk 2.18}, we organize Moser-type iterations
for the function $\psi\left(x_0, u_{l,\rho}/\rho\right)$.
To do this, we fix $0<t<n/(n-1)$ and $\vartheta\geqslant nq/(n-1)$,
then by Sobolev's inequality and by \eqref{psi'<psi/w} and \eqref{psi<g}, we obtain
\begin{equation}\label{sk 2.19}
\begin{aligned}
\bigg(&\int_{B_{r(1+\sigma)}(x_0)}
\psi^{\,t}\left(x_0, u_{l,\rho}/\rho\right)\zeta^{\,\vartheta}\, dx\bigg)^{\frac{n-1}{n}}
\\
&\leqslant \gamma  \int_{B_{r(1+\sigma)}(x_0)}
\left| \nabla \Big[\psi^{\,\frac{t(n-1)}{n}}\left(x_0, u_{l,\rho}/\rho\right)
\zeta^{\,\frac{\vartheta(n-1)}{n}}\Big] \right|\,dx
 \\
 &\leqslant \gamma t\int_{\Omega_{l,r(1+\sigma)}}
 \psi^{\,\frac{t(n-1)}{n}-1}\left(x_0, u_{l,\rho}/\rho\right)
 \frac{g\left(x_0, u_{l,\rho}/\rho\right)}
 {u_{l,\rho}}\,|\nabla u|\,\zeta^{\,\frac{\vartheta(n-1)}{n}}\, dx
 \\
&+\frac{\gamma\, \vartheta}{\sigma\rho}\int_{B_{r(1+\sigma)}(x_0)}
\psi^{\,\frac{t(n-1)}{n}}
\left(x_0, u_{l,\rho}/\rho\right)\zeta^{\,\frac{\vartheta(n-1)}{n}-1}\, dx.
\end{aligned}
\end{equation}
Using (${\rm g}_{3}$), \eqref{gYoungineq1}, \eqref{g<gammapsi} and (\ref{sk 2.18})
with $\tau=1-t(n-1)/n$ and $\theta=\vartheta(n-1)/n$,
we estimate the first term on the right-hand side of (\ref{sk 2.19}) as follows:
\begin{equation}\label{sk 2.20}
\begin{aligned}
&\int_{\Omega_{l,r(1+\sigma)}} \psi^{\,\frac{t(n-1)}{n}-1}
\left(x_0, u_{l,\rho}/\rho\right)
\frac{g\left(x_0, u_{l,\rho}/\rho\right)}{u_{l,\rho}}\,
|\nabla u|\,\zeta^{\,\frac{\vartheta(n-1)}{n}} dx
\\
&\leqslant \gamma e^{\lambda(\rho)}\int_{\Omega_{l,r(1+\sigma)}}
\psi^{\,\frac{t(n-1)}{n}-1}\left(x_0, u_{l,\rho}/\rho\right)
\frac{g\left(x,u_{l,\rho}/\rho\right)}{u_{l,\rho}}\,
|\nabla u|\,\zeta^{\,\frac{\vartheta(n-1)}{n}} dx
\\
&\leqslant \gamma e^{\lambda(\rho)}\int_{\Omega_{l,r(1+\sigma)}}
\psi^{\,\frac{t(n-1)}{n}-1}\left(x_0, u_{l,\rho}/\rho\right)
\frac{G\left(x, |\nabla u|\right)}{u_{l,\rho}}\,\zeta^{\,\frac{\vartheta(n-1)}{n}} dx
\\
&+\gamma\,\frac{e^{\lambda(\rho)}}{\rho} \int_{B_{r(1+\sigma)}(x_0)}
\psi^{\,\frac{t(n-1)}{n}-1}\left(x_0, u_{l,\rho}/\rho\right)
g\left(x, u_{l,\rho}/\rho\right)\,\zeta^{\,\frac{\vartheta(n-1)}{n}} dx
\\
&\leqslant \frac{\gamma\,\vartheta^{q}}{\sigma^{q}}
\left[ 1-\frac{t(n-1)}{n} \right]^{-q}
\frac{e^{2\lambda(\rho)}}{\rho}\int_{B_{r(1+\sigma)}(x_0)} \psi^{\,\frac{t(n-1)}{n}}
\left(x_0, u_{l,\rho}/\rho\right) \,\zeta^{\,\frac{\vartheta(n-1)}{n}-q}\, dx.
\end{aligned}
\end{equation}
Combining \eqref{sk 2.19}, \eqref{sk 2.20},  we arrive at
\begin{equation}\label{sk 2.21}
\begin{aligned}
\bigg(&\fint_{B_{r}(x_{0})}
\psi^{\,t}\left(x_0, u_{l,\rho}/\rho\right) dx\bigg)^{\frac{n-1}{n}}
\\
&\leqslant \frac{\gamma\,\vartheta^{q}}{\sigma^{q}}
\left( 1-\frac{t(n-1)}{n} \right)^{-q}
e^{2\lambda(\rho)}
\fint_{B_{r(1+\sigma)}(x_0)}
\psi^{\,\frac{t(n-1)}{n}} \left(x_0, u_{l,\rho}/\rho\right) dx,
\end{aligned}
\end{equation}
for $0<t<n/(n-1)$, $\vartheta\geqslant nq/(n-1)$.

Now, let $\delta_{1}\leqslant s<n/(n-1)$,
and let $j$ be a non-negative integer number such that
\begin{equation}\label{tj+1<d<tj}
s\left(\frac{n-1}{n}\right)^{j+1} \leqslant \delta_1\leqslant s\left(\frac{n-1}{n}\right)^{j}.
\end{equation}
Setting in \eqref{sk 2.21} $\vartheta=nq$, $r=r_{i}= \dfrac{\rho}{8}(2-2^{-i})$,
$r(1+\sigma)=r_{i+1}$,
$B_{i}= B_{r_{i}}(x_0)$ and $t=t_{i}= s\left(\frac{n-1}{n}\right)^{i}$ for $i=0, 1,\ldots,j+1$,
we have
$$
\begin{aligned}
\bigg(&\fint_{B_{i}}
\psi^{\,t_{i}}\left(x_0, u_{l,\rho}/\rho\right) dx\bigg)^{1/t_{i}}
\\
&\leqslant
\bigg[\gamma\, 2^{iq}
\left( 1-\frac{n-1}{n}s \right)^{-q}
e^{2\lambda(\rho)} \bigg]^{1/t_{i+1}}
\bigg(\fint_{B_{i+1}}
\psi^{\,t_{i+1}}\left(x_0, u_{l,\rho}/\rho\right) dx\bigg)^{1/t_{i+1}}.
\end{aligned}
$$
Iterating this relation and using H\"{o}lder's inequality, we obtain
the following:
\begin{equation*}
\begin{aligned}
\bigg(&\fint_{B_{\rho/8}(x_0)} \psi^{\,s}
\left(x_0, u_{l,\rho}/\rho\right) dx\bigg)^{1/s}
= \bigg(  \fint_{B_{0}}
\psi^{\,t_{0}}\left(x_0, u_{l,\rho}/\rho\right)dx \bigg)^{1/t_{0}}
\\
&\leqslant \prod\limits_{i=0}^j\bigg[ \gamma\, 2^{iq} e^{2\lambda(\rho)}
\bigg(1-\frac{n-1}{n}s \bigg)^{-q}
\bigg]^{1/t_{i+1}}\bigg(\fint_{B_{j+1}} \psi^{\,t_{j+1}}
\left(x_0, u_{l,\rho}/\rho\right) dx\bigg)^{1/t_{j+1}}
\\
&\leqslant
2^{q \sum\limits_{i=0}^{j} i/t_{i+1}}
\bigg[ \gamma e^{2\lambda(\rho)}
\bigg(1-\frac{n-1}{n}s \bigg)^{-q}
\bigg]^{\sum\limits_{i=0}^{j}1/t_{i+1}}
\bigg(\gamma \fint_{B_{\rho/4}(x_0)} \psi^{\,\delta_1}
(x_0, u_{l,\rho}/\rho) dx\bigg)^{1/\delta_1},
\end{aligned}
\end{equation*}
and by \eqref{tj+1<d<tj}, \eqref{defdelta1} and \eqref{ShSkr2.15} we have
$$
\sum\limits_{i=0}^{j}\frac{1}{t_{i+1}}
\leqslant
\frac{1}{\delta_{1}}\, \frac{n}{n-1}
\sum\limits_{i=0}^{\infty} \left(\frac{n-1}{n}\right)^{i}
=\frac{n^{2}}{\delta_{1}(n-1)},
$$
$$
\sum\limits_{i=0}^{j}\frac{i}{t_{i+1}}\leqslant
j \sum\limits_{i=0}^{j}\frac{1}{t_{i+1}}\leqslant
\frac{\gamma (\lambda(\rho)+1)}{\delta_{1}}.
$$
From this, recalling the definition of $\delta_1$
(see again \eqref{defdelta1} and \eqref{ShSkr2.15}),
we arrive at the required inequality \eqref{sk 2.17}.
This completes the proof of the lemma.
\end{proof}

Combining Lemmas \ref{lemShSkrVoit2.3} and \ref{sk.lem 2.4}, we obtain that
\begin{equation*}
\bigg(\fint_{B_{\rho/8}(x_0)}
g^{s}\left(x_0, u_{l,\rho}/\rho\right) dx\bigg)^{1/s}
\leqslant \Lambda(\gamma, 3n, \rho) g\left(x_0,
\frac{M_{l}(\rho)-M_{l}(\rho/4)+2b_0\rho}{\rho}\right),
\end{equation*}
for $0<s<n/(n-1)$, which proves Theorem \ref{thweakHarnineq}.


\section{Growth estimate on the gradient: proof of Theorem \ref{thgradest}}
\label{Sectgrest}

Throughout this section we assume that the hypotheses and notation of Theorem \ref{thgradest}
are in force. For definiteness, we assume that $l\geqslant M_{f}(\rho)$ in \eqref{defulrho}.
In the case where $l\leqslant m_{f}(\rho)$ in \eqref{defulrho}, the proof is completely similar.
We first prove the following inequality:
\begin{equation}\label{estgrthgrad}
\int_{\Omega_{l,\rho/8}}G(x,|\nabla u|)\,\zeta^{\,q}\,dx
\leqslant \Lambda(\gamma, 3n,\rho)M_{l}(\rho)\rho^{n-1}
g\left( x_{0},\frac{M_{l}(\rho)-M_{l}(\rho/4)+2b_{0}\rho}{\rho} \right),
\end{equation}
where 
$\zeta\in C_{0}^{\infty}(B_{\rho/8}(x_{0}))$,
$0\leqslant\zeta\leqslant1$, $\zeta=1$ in $B_{\rho/16}(x_{0})$, and
$|\nabla \zeta|\leqslant 16/\rho$.
For this purpose, we test \eqref{gelintidentity} by $\varphi=u_{l}\zeta^{\,q}$ and obtain
\begin{equation}\label{eq3.1}
\int_{\Omega_{l,\rho/8}} G(x, |\nabla u|)\,\zeta^{\,q}\,dx
\leqslant\gamma\, \frac{M_{l}(\rho)}{\rho}
\int_{\Omega_{l,\rho/8}} g(x, |\nabla u|)\,\zeta^{\,q-1}\,dx.
\end{equation}
The integral on the right-hand side of \eqref{eq3.1} is estimated by Young's inequality
\eqref{gYoungineq1} with $a=|\nabla u|$, $b=\zeta^{-1}$ and
\begin{equation}\label{defepsilon}
\varepsilon=\frac{\rho}{u_{l,\rho}}\,
\frac{\psi^{\tau}\left(x_{0},\frac{M_{l}(\rho)-M_{l}(\rho/4)+2b_{0}\rho}{\rho}\right)}
{\psi^{\tau}(x_{0}, u_{l,\rho}/\rho)},
\end{equation}
where the function $\psi$ is defined in \eqref{defpsi}, and
\begin{equation}\label{deftau}
0<\tau<\frac{1}{(q-1)(n-1)}.
\end{equation}
Thus, it follows from \eqref{eq3.1} that
\begin{equation}\label{estGI1I2}
\int_{\Omega_{l,\rho/8}} G(x, |\nabla u|)\,\zeta^{\,q}\,dx
\leqslant I_{1}+I_{2},
\end{equation}
where
\begin{equation}\label{defI1}
I_{1}=
\gamma\, \frac{M_{l}(\rho)}{\rho}\int_{\Omega_{l,\rho/8}}
\varepsilon\, G(x, |\nabla u|)\,\zeta^{\,q}\,dx,
\end{equation}
$$
I_{2}=
\gamma\, \frac{M_{l}(\rho)}{\rho}\int_{\Omega_{l,\rho/8}}
g\left(x, \frac{1}{\varepsilon\zeta}\right)\zeta^{\,q-1}\,dx.
$$
Let's estimate $I_{2}$ from above. Using \eqref{gqineq} with
${\rm w}=(\varepsilon\zeta)^{-1}$, ${\rm v}=\varepsilon^{-1}$
and taking into account \eqref{defepsilon}, we obtain
$$
\begin{aligned}
I_{2}
\leqslant
&\gamma\, \frac{M_{l}(\rho)}{\rho}\int_{\Omega_{l,\rho/8}}
g(x,\varepsilon^{-1})\,dx
\\
&=\gamma\, \frac{M_{l}(\rho)}{\rho}\int_{\Omega_{l,\rho/8}}
g\Bigg( x, \frac{u_{l,\rho}}{\rho}\,
\frac{\psi^{\tau}(x_{0}, u_{l,\rho}/\rho)}
{\psi^{\tau}\left(x_{0},\frac{M_{l}(\rho)-M_{l}(\rho/4)+2b_{0}\rho}{\rho}\right)} \Bigg)dx.
\end{aligned}
$$
The resulting integral is estimated using condition (${\rm g}_{3}$)
and inequality \eqref{gqineq} with
$$
{\rm w}=\frac{u_{l,\rho}}{\rho}\,
\frac{\psi^{\tau}(x_{0}, u_{l,\rho}/\rho)}
{\psi^{\tau}\left(x_{0},\frac{M_{l}(\rho)-M_{l}(\rho/4)+2b_{0}\rho}{\rho}\right)},
\quad {\rm v}=\frac{u_{l,\rho}}{\rho},
$$
that yields
$$
I_{2}\leqslant \gamma\, \frac{e^{\lambda(\rho)}M_{l}(\rho)}{\rho}\int_{B_{\rho/8}}
\frac{\psi^{\tau(q-1)}(x_{0}, u_{l,\rho}/\rho)}
{\psi^{\tau(q-1)}\left(x_{0},\frac{M_{l}(\rho)-M_{l}(\rho/4)+2b_{0}\rho}{\rho}\right)}\,
g(x_{0}, u_{l,\rho}/\rho)\,dx.
$$
Hence, taking into account \eqref{g<gammapsi} and \eqref{psi<g}, we derive the estimate
$$
I_{2}\leqslant
\gamma\, \frac{e^{\lambda(\rho)}M_{l}(\rho)}{\rho}\,
g^{\tau(1-q)}\left(x_{0},\frac{M_{l}(\rho)-M_{l}(\rho/4)+2b_{0}\rho}{\rho}\right)
\int_{B_{\rho/8}} \big[g(x_{0}, u_{l,\rho}/\rho)\big]^{1+\tau(q-1)}\,dx.
$$
By \eqref{deftau} we have $1+\tau(q-1)<n/(n-1)$. Therefore,
it is possible to apply the weak Harnack inequality \eqref{weakHrnckineq} to obtain
\begin{equation}\label{estI2}
I_{2}\leqslant \Lambda(\gamma, 3n, \rho)M_{l}(\rho)\rho^{n-1}\,
g\bigg( x_{0}, \frac{M_{l}(\rho)-M_{l}(\rho/4)+2b_{0}\rho}{\rho}  \bigg).
\end{equation}
In order to estimate $I_{1}$, note that by virtue of \eqref{defepsilon} and \eqref{defI1}
we have the equality
$$
I_{1}=\gamma M_{l}(\rho)
\psi^{\tau}\bigg(x_{0},\frac{M_{l}(\rho)-M_{l}(\rho/4)+2b_{0}\rho}{\rho}\bigg)
\int_{\Omega_{l,\rho/8}} \psi^{-\tau}(x_{0}, u_{l,\rho}/\rho)
\frac{G(x,|\nabla u|)}{u_{l,\rho}}\,\zeta^{\,q}\,dx.
$$
Hence, applying successively \eqref{sk 2.18}, \eqref{psi<g}
and the weak Harnack inequality \eqref{weakHrnckineq}, we derive the inequality
\begin{equation}\label{estI1}
\begin{aligned}
I_{1}&\leqslant \gamma\,
\frac{e^{\lambda(\rho)}M_{l}(\rho)}{\rho}\,
\psi^{\tau}\left(x_{0},\frac{M_{l}(\rho)-M_{l}(\rho/4)+2b_{0}\rho}{\rho}\right)
\int_{B_{\rho/8}(x_{0})}\psi^{1-\tau}(x_{0}, u_{l,\rho}/\rho)\,dx
\\
&\leqslant \gamma\,
\frac{e^{\lambda(\rho)}M_{l}(\rho)}{\rho}\,
g^{\tau}\left(x_{0},\frac{M_{l}(\rho)-M_{l}(\rho/4)+2b_{0}\rho}{\rho}\right)
\int_{B_{\rho/8}(x_{0})}g^{1-\tau}(x_{0}, u_{l,\rho}/\rho)\,dx
\\
&\leqslant \Lambda(\gamma,3n, \rho)M_{l}(\rho)\rho^{n-1}
g\left(x_{0},\frac{M_{l}(\rho)-M_{l}(\rho/4)+2b_{0}\rho}{\rho}\right).
\end{aligned}
\end{equation}
Combining inequalities \eqref{estGI1I2}, \eqref{estI2} and \eqref{estI1},
we arrive at \eqref{estgrthgrad}.

We are now in a position to complete the proof of the inequality \eqref{eq1.7}.
Let $\eta\in C_{0}^{\infty}(B_{\rho/16}(x_{0}))$ be a function such that
$0\leqslant\eta\leqslant1$, $\eta=1$ in $B_{3\rho/64}(x_{0})$ and
$|\nabla\eta|\leqslant 64/\rho$, then
\begin{equation}\label{eq3.9}
\begin{aligned}
\int_{\Omega_{l,\rho/16}}G(x, |\nabla(u_{l,\rho}\eta)|)\,dx
&\leqslant
\int_{\Omega_{l,\rho/16}}G(x, |\nabla u|\eta+64u_{l,\rho}/\rho )\,dx
\\
&\leqslant\int_{\Omega_{l,\rho/16}}g(x, |\nabla u|+64u_{l,\rho}/\rho )\,
(|\nabla u|+64u_{l,\rho}/\rho)\,dx.
\end{aligned}
\end{equation}
In addition, due to \eqref{gqineq} the following holds on the set $\Omega_{l,\rho/16}$:
\begin{equation}\label{eq3.10}
\begin{aligned}
g(x, |\nabla u|+64u_{l,\rho}/\rho )&\leqslant
g(x, 2\max\{ |\nabla u|, 64u_{l,\rho}/\rho\} )
\\
&\leqslant c_{1}128^{q-1}g(x, \max\{ |\nabla u|, u_{l,\rho}/\rho\} )
\\
&\leqslant c_{1}128^{q-1} (g(x, |\nabla u|)+ g(x, u_{l,\rho}/\rho) ).
\end{aligned}
\end{equation}
Substituting \eqref{eq3.10} into \eqref{eq3.9}, expanding the brackets and splitting the terms
$g(x,u_{l,\rho}/\rho)|\nabla u|$ and $g(x,|\nabla u|)u_{l,\rho}/\rho$ by Young's inequality
\eqref{gYoungineq1}, we obtain
\begin{equation}\label{eq3.11}
\int_{\Omega_{l,\rho/16}}G(x, |\nabla(u_{l,\rho}\eta)|)\,dx
\leqslant
\gamma \bigg(\int_{\Omega_{l,\rho/16}}G(x,|\nabla u|)\,dx
+ \int_{\Omega_{l,\rho/16}}G(x, u_{l,\rho}/\rho)\,dx \bigg).
\end{equation}
The first integral on the right-hand side of \eqref{eq3.11}
is estimated by using \eqref{estgrthgrad}:
\begin{equation}\label{eq3.12}
\begin{aligned}
\int_{\Omega_{l,\rho/16}}G(x,|\nabla u|)\,dx
&\leqslant
\int_{\Omega_{l,\rho/8}}G(x,|\nabla u|)\,\zeta^{\,q}dx
\\
&\leqslant
\Lambda(\gamma, 3n,\rho)M_{l}(\rho)\rho^{n-1}
g\left( x_{0},\frac{M_{l}(\rho)-M_{l}(\rho/4)+2b_{0}\rho}{\rho} \right),
\end{aligned}
\end{equation}
and the second one, by using condition (${\rm g}_{3}$) and the weak Harnack inequality
\eqref{weakHrnckineq}:
\begin{equation}\label{eq3.13}
\begin{aligned}
\int_{\Omega_{l,\rho/16}}G(x, u_{l,\rho}/\rho)\,dx
\leqslant
\gamma\rho^{-1}(M_{l}(\rho)+2b_{0}\rho)e^{\lambda(\rho)}
\int_{B_{\rho/8}(x_{0})}g(x_{0},u_{l,\rho}/\rho)\,dx
\\
\leqslant
\Lambda(\gamma, 3n,\rho)(M_{l}(\rho)+2b_{0}\rho)\rho^{n-1}
g\left( x_{0},\frac{M_{l}(\rho)-M_{l}(\rho/4)+2b_{0}\rho}{\rho} \right).
\end{aligned}
\end{equation}
Collecting \eqref{eq3.11}--\eqref{eq3.13}, we arrive at \eqref{eq1.7}.
The proof is complete.


\bigskip

CONTACT INFORMATION

\medskip

Oleksandr V.~Hadzhy\\
Institute of Applied Mathematics and Mechanics,
National Academy of Sciences of Ukraine, Gen. Batiouk Str. 19, 84116 Sloviansk, Ukraine\\
aleksanderhadzhy@gmail.com

\medskip
Mykhailo V.~Voitovych\\Institute of Applied Mathematics and Mechanics,
National Academy of Sciences of Ukraine, Gen. Batiouk Str. 19, 84116 Sloviansk, Ukraine\\voitovichmv76@gmail.com

\end{document}